\newcommand{\R}{\mathbb{R}}
\newcommand{\eps}{\epsilon}
\newcommand{\e}{\varepsilon}
\newcommand{\la}{\lambda}
\newcommand{\ds}{\displaystyle}
\newcommand{\uep}{u_{\e}}
\newcommand{\nep}{n_{\e}}
\newcommand{\dd}{\mathrm{d}}
\newcommand{\ulbar}{\underline u}
\DeclareMathOperator{\Int}{Int}
\DeclareMathOperator{\sign}{sign}
\renewcommand{\epsilon}{\varepsilon}
\crefname{hypothesis}{Hypothesis}{Hypotheses}
\title{Thin front limit of an integro--differential Fisher--KPP equation with fat--tailed kernels\thanks{Submitted to the editors DATE.
\funding{Part of this work was performed within the framework of the LABEX MILYON (ANR- 10-LABX-0070) of Universit\'e de Lyon, within the program ``Investissements d'Avenir'' (ANR-11- IDEX-0007) and the project NONLOCAL (ANR-14-CE25-0013) operated by the French National Research Agency (ANR). In addition, this project has received funding from the European Research Council (ERC) under the European Unions Horizon 2020 research and innovation programme (grant agreement No 639638).  CH was partially supported by the National Science Foundation Research Training Group grant
DMS-1246999. }}}
\author{Emeric Bouin\thanks{Universit\'e Paris-Dauphine, UMR CNRS 7534,Place du Mar\'echal de Lattre de Tassigny, 75775 Paris Cedex 16, France.
  (\email{bouin@ceremade.dauphine.fr}, \url{https://www.ceremade.dauphine.fr/\string ~bouin/}).}
\and Jimmy Garnier \thanks{Univ. Savoie Mont--Blanc, CNRS, LAMA, F-73000 Chamb\'ery, France.
  (\email{jimmy.garnier@univ-smb.fr},\url{http://www.lama.univ-savoie.fr/\string ~garnier/})}
\and Christopher Henderson \thanks{Department of Mathematics, the University of Chicago, 5734 S.~University Ave., Chicago, Il 60637.
  (\email{henderson@math.uchicago.edu},\url{https://math.uchicago.edu/\string ~henderson/})}
\and Florian Patout \thanks{UMPA, Ecole Normale Sup\'erieure de Lyon, UMR CNRS 5669 , 46 all\'ee d'Italie, F-69364~Lyon~cedex~07, France.
  (\email{florian.patout@ens-lyon.fr},\url{http://perso.ens-lyon.fr/florian.patout/})}
}
\begin{document}

\maketitle

\begin{abstract}
 We study the asymptotic behavior of solutions to a monostable integro-differential Fisher-KPP equation, that is where the standard Laplacian is replaced by a convolution term, when the dispersal kernel is fat-tailed. We focus on two different regimes. Firstly, we study the long time/long range scaling limit by introducing a relevant rescaling in space and time and prove a sharp bound on the (super-linear) spreading rate in the Hamilton-Jacobi sense by means of sub- and super- solutions. Secondly, we investigate a long time/small mutation regime for which, after identifying a relevant rescaling for the size of mutations, we derive a Hamilton-Jacobi limit.
\end{abstract}

\begin{keywords}
Asymptotic analysis; Exponential speed of propagation; fat--tailed kernels; Fisher-KPP equation; integro--differential equation; Hamilton-Jacobi equation.
\end{keywords}

\begin{AMS}
  35K57; 35B40; 35B25; 47G20; 35Q92.
\end{AMS}

%
%

\section{Introduction}
\subsection*{The model}

In this paper, we focus on the asymptotic behavior of solutions to the following integro--differential equation 
\begin{equation}\label{eq:main}
\left \lbrace\begin{array}{ll}
	n_t = J * n-n + n(1 - n), & \text{ in } \  (0,+\infty) \times \R, \medskip\\
	n(t=0,\cdot) = n^0.
\end{array}
\right.
\end{equation}
where the \emph{dispersal kernel} $J,$ or \emph{mutation kernel} depending on the ecological context,  is a given function and
\[
	(J * n) (t,x):= \int_\R J(x-y) n(t,y) \, dy.
\]
When the term $J*n- n$ is replaced by $\Delta n$, this is the well-known Fisher-KPP equation~\cite{Fisher,fkpp}.

This equation arises naturally in population dynamics to model systems with non--local effects \cite{fife,medlockkot}. In this context, the unknown function $n$ represents a density of individuals at time $t$ and at position $x$. One of the most interesting features of this model, compared to  the classical Fisher-KPP equation, is that it allows for long range dispersal events.

The existence of these events depends critically on the tail of the kernel $J$.  To differentiate between the two regimes that arise, we introduce the following notation. Roughly, we say that the kernel $J$ is {\em thin--tailed}, or {\em exponentially bounded}, if there exists $\lambda > 0$ such that
\begin{equation}\label{def:J_expobounded}
	\int_\R J(h)e^{\la h} \dd h < \infty,
\end{equation}
Otherwise we may say the kernel is {\em fat--tailed}. We make mathematically precise what we call a {\em fat--tailed} kernel in \Cref{hyp:J}.

When the kernel is thin--tailed, solutions exhibit the same behavior as solutions to the Fisher--KPP equation in that travelling wave solutions exist~\cite{aronson_weinberger,diekmann1979}.  This regime can be used to model a biological invasion scenario in which a population invades a homogeneous landscape at constant speed. There is an extensive literature about models similar to \cref{eq:main} investigating the existence and stability of traveling waves solutions, see~\cite{CarChm04,Sch80,WeinbergerNL} along with the work and references contained in the habilitation thesis of Coville~\cite{CovilleHab}. 

On the other hand, super-linear in time propagation phenomena can occur in ecology. A classical example is Reid's paradox of rapid plant migration \cite{clark1998,clark1998trees} that is usually resolved using fat--tailed kernels. 
Indeed, when the kernel is fat--tailed, the solutions of~\ \c	ref{eq:main} do not propagate at constant speed but accelerate with a rate that depends on the thickness of the tail of the kernel $J$~\cite{medlockkot,jimmy2011}. This acceleration phenomenon results from the combination of fat--tailed dispersion events and the Fisher--KPP non--linearity $n(1-n)$ that makes the solution grow almost exponentially when small. In particular, when this cooperation between the kernel and the non--linear term is broken, acceleration can be stopped. Recently, Alfaro and Coville~\cite{AlfCov_P} have proved that traveling wave solutions may exist with fat--tailed kernels when a weak Allee effect is present, that is a non--linearity of the form $n^\beta(1-n)$ with $\beta > 1$, if the tail of the kernel is not too fat.

We shall now be more precise on the type of dispersal kernels that we will consider. We emphasize that the following assumptions on the kernel $J$ hold true throughout this work, even when not explicitly stated: 
\begin{hyp}[Fat--tailed kernel]\label[hyp]{hyp:J}
The kernel $J$ is a symmetric probability density, that is, for all $x \in \R$,
\begin{equation}\label[hyp]{hyp:J_probakernel}
  \int_{\R} J(x)\, dx = 1,
  \qquad  J(x) = J(\vert x \vert)
  \qquad \hbox{ and } \qquad J(x) > 0. 
\end{equation}
The decay of $J$ is encoded in the function 
\begin{equation}\label[hyp]{eq:f}
 f:= - \ln(J), 
\end{equation}
We further assume the following three properties:

{\bf Monotonicity and asymptotic convexity of $J$.} The function $f \in \mathcal{C}^2(\R)$ is strictly increasing on $(0,+\infty)$ and asymptotically concave, that is, there exists $x_{\rm conc}>0$ such that
\begin{equation}\label[hyp]{hyp:J_regconcave}
\begin{cases}
f(x) > f(y), \qquad &\text{ if } x > y \geq 0 \qquad \text{ and }\\
f''( x ) \leq 0 \qquad &\text{  if } x \geq x_{\rm conc}.
\end{cases}
\end{equation}
Without loss of generality, we suppose that $f(0) = 0$, or $J(0) = 1$, since otherwise we may re-scale the equation.  This implies that $J(\R) = (0,1]$. Moreover, $J$ is invertible on $\R^+$, this inverse from $(0,1]$ to $\R^+$ is what we denote $J^{-1}$ in the sequel. Similarly, $f$ is invertible on $\R^+$, this inverse from $\R^+$ to $\R^+$ is what we denote $f^{-1}$ in the sequel.

{\bf Lower bound on the tail of $J$.} The kernel $J$ decays slower than any exponential in the sense that
\begin{equation}\label[hyp]{hyp:J_slowexp}
\limsup_{x\to\infty} \, \frac{x \, f'(x)}{f(x)}<1.
\end{equation}
Roughly speaking, this implies that $f$ grows sub-linearly and that $J'(x) = o_{x\to\infty}(J(x))$.

{\bf Upper bound on the tail of $J$.} The tail of $J$ is thinner than $\vert x \vert^{-1}$, in the sense that
\begin{equation}\label[hyp]{hyp:J_poly}
\liminf_{x  \to \infty} x\, f'( x ) > 1.
\end{equation}
For ease of notation and since it will play a role in our analysis, we define $\mu:= \liminf_{x\to\infty} x\, f'(x)$.
\end{hyp}

The main examples of kernels $J$ that satisfy \Cref{hyp:J} are  either sub--exponential kernels where $f(x)= (1 + |x|^2)^{\alpha/2}$ with $\alpha<1$ or polynomial kernels where $f(x) = \alpha  \ln (1+ \vert x \vert^2)/2$ with $\alpha > 0$.
Our technical assumptions~\eqref{hyp:J_regconcave}--\eqref{hyp:J_poly} do not cover borderline kernels such as $f(x) = \vert x \vert/ \ln( 1 + \vert x \vert^2 )$ that were considered by Garnier~\cite{jimmy2011}. Moreover, we restrict our focus to the effects of the tails of $J$ on the rate of propagation. As a consequence, we do not include potential singularities at the origin, which is the case for a fractional Laplacian operator, for example. We expect however that our results also hold for these cases.

Garnier~\cite{jimmy2011} proved that the acceleration propagation of the solution of~\eqref{eq:main} can be measured by tracking the level sets $E_\la(t):=\{x \in \R: n(t,x)=\la \}$ of the solution $n$, where $\lambda\in(0,1)$.  Under the fat--tailed kernel hypothesis, these level sets move super-linearly in time. More precisely,  Garnier~\cite{jimmy2011} proved that there exists a constant $\rho>1$ such that for any $\lambda\in(0,1)$ and any $\e>0$, any element $x_\lambda$ of the level set $E_\lambda$ satisfies for all $\e > 0$ and $t$ large enough:
 \begin{equation}\label{eq:levelsets_jimmy}
	J^{-1}\left(e^{-(1-\e)t}\right)
		\leq |x_\la(t)| 
		\leq J^{-1}\left(e^{-\rho t}\right).
\end{equation} 
The propagation problem has been recently considered with non--symmetric kernels in \cite{Finkel17} and in the multi-dimensional case in \cite{Finkel16}.  There Finkelshtein, Kondratiev, and Tkachov present a technical argument improving the precision in Garnier's bounds. As will be made clear below, one goal of this paper is to connect these results to an underlying Hamilton-Jacobi equation, giving a new interpretation to the speed of propagation.  In doing so, we aim to provide a simpler proof of propagation that is, in some regimes, more precise than those above.

The approach we use comes from the seminal paper of Evans and Souganidis~\cite{EvansSouganidis}, where the authors applied the long time/long range limit to the Fisher-KPP equation and showed convergence, after applying the Hopf-Cole transform, to a Hamilton--Jacobi equation.  Further, they show that the solution to this Hamilton--Jacobi equation determines the propagation rate of the original Fisher-KPP equation.
In the first part of our work, we first describe the scaling corresponding to the long time/long range limit for~\cref{eq:main}. Then we show convergence to a Hamilton--Jacobi equation, which gives information on the spreading properties of the solution to the original equation~\cref{eq:main}.

The same approach has been used extensively to understand propagation in various physical systems as well as more general qualitative behavior of solutions of parabolic equations. In the context of adaptative dynamics, Diekmann, Jabin, Mischler, and Perthame in~\cite{diekmann2005} have provided an example of this approach. They have derived, by a limiting procedure, a Hamilton--Jacobi equation from a mutation--selection equation with small mutations. 

The literature on this topic is enormous, but we mention a few closely related works.
Perthame and Souganidis studied~\cref{eq:main} with thin--tailed kernels in~\cite{PerSou05}. Barles, Mirrahimi and Perthame focused on Dirac concentration in integro-differential equations with local and non--local non--linearities in \cite{barles2009}. Recently, Mirrahimi and M\'el\'eard  extended this approach to a fractional diffusion~\cite{SMSM}. The second part of our work is inspired by and closely follows this last work, extending their technique to the case with a general dispersal kernel $J$, the main difficulties coming from the fact that, in contrast to the fractional Laplacian, the kernel $J$ is not explicit and has no natural scaling. As a consequence, the scalings we use are not always easy to read but we give some heuristics that make them appear naturally. Moreover, our scalings allow to characterize the small mutation regime when the mutation kernel is fat--tailed.

In the present work, we only focus on the local Fisher--KPP nonlinearity $n(1-n)$ but our results can be generalized to a non--local non--linearity of the form $n(r-\int_\R n(x)dx)$, exactly as in \cite{SMSM}. This non--local term arises also naturally in the context of mutation--selection model or structured population models. In this context, $x$ denotes a quantitative trait and $n(t,\cdot)$ describes the distribution of this trait inside the population. Thus, the parameter $r$ describes the fitness of the population and the integral term $\int_\R n(t,x)dx$ is a mean--competition term. The model~\cref{eq:main} with this non--local nonlinearity can be derived rigorously from an individual--based model where mutations are described by a fat--tailed kernel without jump (see for instance~\cite{Bae07,GurNis75}). In general, the growth rate $r$ depends also on the trait parameter $x$~\cite{Jou12}. However, this general form induces more technical difficulties that we do not tackle in this paper.

\subsection*{The propagation regime}
In order to capture the accelerated propagation phenomenon that occurs with fat--tailed kernels, we look at the behavior of $n$ in the long time/long range limit.  Indeed, we first rescale the time $t \mapsto t/\e$ by a small parameter $\e$ and then we need to find an accurate rescaling in space that captures the propagation regime. We thus look for a space rescaling of the form $x\mapsto \psi_\e(x)$. The seminal paper \cite{EvansSouganidis} used the hyperbolic scaling $( t/\e, x/\e)$  for the asymptotic study of the Fisher-KPP equation.
The precise shape of $\psi_\e$ will be given below, but we first start with heuristic explanation of this expression.

Using the results of Garnier \cref{eq:levelsets_jimmy}, it is reasonable to say that the position $x$ of any level sets satisfies
\begin{equation*}
J(x) \sim e^{-t}.
\end{equation*}
Our aim is to find a rescaling $\psi_\e$ that follows the level sets in the long time rescaling $t/\e$. So  we want
\begin{equation*}
J(\psi_\e(x)) \sim e^{- \frac{t}{\e}} \sim \left( e^{-t}\right)^{\frac{1}{\e}} \sim J(x)^{\frac{1}{\e}}.
\end{equation*}
As a consequence, for any $\varepsilon>0$, it is natural to set 
\begin{equation}\label{eq:rescaling_psi}
\psi_\e(x)= \sign(x) J^{-1}\left(J( x )^\frac{1}{\e}\right) \ \hbox{ for all } x\in\R.
\end{equation}

The rescaling $\psi_\varepsilon$ transforms functions that looks like $J$ into functions that looks like $ \exp\left\{\left(\ln J(x)\right)/\epsilon\right\}.$ Indeed, notice that $J(\psi_\epsilon(x)) = J(x)^{1/\epsilon} = \exp\left\{\left(\ln J(x)\right)/\epsilon\right\}.$   Since the solution $n$ of the Cauchy problem~\cref{eq:main} is expected to behave like $e^t J$ for large $x,$ we can heuristically say that the rescaled function $n_\epsilon = n(t/\epsilon,\psi_\epsilon)$ should behave like $e^{t/\e}J(\psi_\epsilon(x))= \exp\left\{(t+\ln J(x))/\e\right\}\sim \exp\left\{(\ln n(t,x))/\epsilon\right\}.$ The last expression is the logarithmic Hopf--Cole transformation of $n(t,x).$ Our rescaling is thus compatible with this transformation.

The scaling $\psi_\e$ can also be rewritten in terms of the function $f$ introduced in \cref{hyp:J}.  Indeed, $$\psi_\e(x)= \sign(x) f^{-1}\left(f(x)/\e\right).$$
We derive a precise formula for this scaling for our two main examples: the sub--exponential kernels and the polynomial kernels.
\begin{example}[Sub--exponential kernels]
Consider $f(x):=   (1 + |x|^2)^{\alpha/2} - 1$ with $\alpha \in (0,1)$.
Then 
$$\psi_\e(x) =  \sign(x)  \left[ \left( 1+ \frac{1}{\e} \left[ \left( 1 + \vert x \vert^2 \right)^\frac{\alpha}{2} -1 \right] \right)^\frac{2}{\alpha} - 1 \right]^\frac12.  $$ 
Observe that $\psi_\e(x) \sim \e^{- \frac{1}{\alpha}}  x $ when $\vert x \vert \to + \infty$ at fixed $\e$ and $\psi_\e(x) \sim \e^{- \frac{1}{\alpha}} \sign(x) $ $ \left[ \left( 1 + \vert x \vert^2 \right)^\frac{\alpha}{2} -1 \right]^\frac{1}{\alpha}$ when $\eps \to 0$ and $x \neq 0$.
\end{example}

\begin{example}[Polynomial kernels]
Consider $f(x):= (1+\alpha)\ln\left( (1 + |x|^{2})^{\frac12} \right)$ for $\alpha > 0$.  In this case, the scaling becomes 
\[
	\psi_\e(x) =  \sign(x) \sqrt{ (1 + x^2)^{1/\e} - 1}.
\]
One can observe that, for any fixed $\e$ and $\alpha$, $\psi_\e(x) \sim  \sign(x)  \vert x \vert^\frac{1}{\e}$ as $\vert x \vert \to + \infty$.   In addition, $\psi_\e(x) \sim  \sign(x) \left(1 + \vert x \vert^{2}\right)^\frac{1}{2\e}$, when $\e \to 0$ at fixed $x \neq 0$.  We point out that when $\alpha\in(0,2),$ the kernel $J$ decays at the same rate as the kernel in the definition of the fractional Laplacian $(-\Delta)^{\alpha/2}$ as $|x|\to\infty.$  This suggests that the behaviour of the solution of~\cref{eq:nep_prop} with $f$ as above and $\alpha\in(0,2)$ and the behaviour of the solution of~\cref{eq:main} $(-\Delta)^{\alpha/2}n$ in the place of $J*n - n$ are the same.  We verify this below. We also point out that  in the limit $\vert x \vert \to + \infty$, $\psi_\e(x) \sim \sign(x)\vert x \vert^\frac{1}{\e}$, which is the rescaling chosen in \cite{SMSM} for the fractional Laplacian.  
\end{example}

As far as the initial data is concerned, we assume without lost of generality that there exists two positive constants $\underline{C}$ and $\overline{C}$ such that $\underline{C}<1<\overline{C}$ and
\begin{equation}\label[hyp]{cond:n0_likeJ}
\underline{C} J  \leq n^0  \leq \overline{C} J.
\end{equation}
 Moreover, we assume that the initial data is symmetric, and since the kernel $J$ is also assumed to be symmetric (see \cref{hyp:J_probakernel}), the solution $n$ thus remains symmetric for all times.
\begin{remark}\label{rem:J}
  The lower bound in assumption~\eqref{cond:n0_likeJ} is not restrictive, though it allows us to avoid discussion of a boundary layer at $t=0$. Indeed, assuming \Cref{hyp:J}, any solution of~\cref{eq:main} starting with initial data that decays faster than $J$ at infinity satisfies~\cref{cond:n0_likeJ} after at most time $1$. More precisely, for any $n^0$ decaying faster than $J$, there exist constants $\underline{C}$ and $\overline{C}$ such that $\underline{C} J(x)  \leq n(1,x)  \leq \overline{C} J(x)$ for any $x\in\R$ see, e.g.,~\cite[Section~4.2]{jimmy2011}). After translating in time, our argument applies with initial data $n(1,x)$.  From the uniqueness of solutions of the Cauchy problem~\cref{eq:main}, our conclusions hold for $n$ as well. 
If, on the other hand, the upper bound in~\cref{cond:n0_likeJ} does not hold, i.e.~$n^0$ decays slower than $J$, then we expect different behavior.  Indeed, by analogy with~\cite{Alfaro, HamelRoques, Henderson_Acceleration}, we expect faster propagation depending only on the rate of decay of the initial data.

In view of the above, the assumption~\eqref{cond:n0_likeJ} is quite general for the regimes that we wish to understand.
\end{remark}

Let us now rescale time and space as follows: $t\mapsto t/\e$ and $x\mapsto\psi_\e(x)$ and define the solution $\nep$ in the new variables: $n_\e(t,x) = n(t/\e,\psi_\e(x))$ where $n$ solves~\eqref{eq:main} with initial condition $n^0$ satisfying~\eqref{cond:n0_likeJ}. Plugging this quantity into~\eqref{eq:main}, we obtain the following equation:
\begin{equation}\label{eq:nep_prop}
\begin{cases}
\e \partial_t n_\e = \displaystyle\int_{-\infty}^\infty J(h) \left[ n_\e \left(t,\psi_\e^{-1}\left( \psi_\e( x ) - h  \right) \right) - n_\e \right] dh \; + \; \nep \left( 1 - n_\e \right)& \!\!\!\hbox{ in } (0,\infty)\times\R, \medskip\\
\nep(0,x)=n^0(\psi_\e(|x|)),& \!\!\!\! \text{ for } x\in\R.
\end{cases}
\end{equation}
We know from \cite{jimmy2011} that the solutions of~\cref{eq:nep_prop} will propagate and converge to one as $t\to\infty$. In the large scale limit $\epsilon\to0$ with our change of variables, we expect this propagation to be transformed into dynamics of an interface moving with time. To capture this phenomenon, we use the logarithmic Hopf--Cole transform~\cite{EvansSouganidis,Freidlin1985} as follows:
\begin{equation}\label{eq:HopfCole_prop}
u_{\e}:= - \e \ln n_\e.
\end{equation}
Notice that this is equivalent to $\nep=\exp\left(-\frac{\uep}{\e}\right)$. 
Then, the function $\uep$ solves:
\begin{equation}\label{eq:uep_prop}
\begin{cases}
\!\partial_t \uep + 1
		 \! = \! \displaystyle\int_{-\infty}^\infty \!\! J(h) \left[ 1 - e^{ - \frac{1}{\e} \left(  \uep \left(t, \psi_\e^{-1}\left( \psi_\e( x ) - h  \right) \right) - u_\e \left( t,x \right) \right)}   \right] dh \;   + n_\e, \ \! \!\!\hbox{ in } (0,\infty)\times\R\medskip\\
\uep(0,x)= -\e \ln \big( n^0(\psi_\e(x)) \big), \ x\in\R.
\end{cases}
\end{equation}

Note that assumptions \eqref{cond:n0_likeJ} imply that $\uep(0,\cdot) \to f$  uniformly in $\R$ as $\epsilon\to0$. Our aim is to compute the limit $\e \to 0$ of $u_\eps$ and then deduce the behavior of $n_\eps$. The result is the following.

\begin{theorem}\label[thm]{thm:u_conv}
Let $\uep$ be the solution of~\cref{eq:uep_prop} with initial condition satisfying~\cref{cond:n0_likeJ}. If the kernel $J$ satisfies \cref{hyp:J}, then as $\e \to 0$, the sequence $u_\eps$ converges locally uniformly on $(0,\infty) \times \R$ to
\begin{equation*}
u(t,x) := \max\{f(x) - t,0\}.
\end{equation*}
\end{theorem}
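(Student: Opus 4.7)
My plan is to prove convergence via a direct sub-/super-solution argument on the rescaled density $n_\varepsilon$, then read off bounds on $u_\varepsilon = -\varepsilon \ln n_\varepsilon$. Since the target $u(t,x) = \max\{f(x) - t, 0\}$ corresponds under the Hopf--Cole transform to $e^{-u/\varepsilon} = \min\{1, e^{(t-f(x))/\varepsilon}\}$, I will construct two barriers matching this profile up to multiplicative factors $e^{\pm \delta/\varepsilon}$ that disappear after taking $-\varepsilon \ln$ and sending $\delta \to 0$. First, the maximum principle for~\cref{eq:main} together with~\cref{cond:n0_likeJ} will give a uniform upper bound on $n_\varepsilon$, hence a uniform lower bound on $u_\varepsilon$; coupled with a linear tail bound in the style of~\cite{jimmy2011}, this provides a locally uniform upper bound. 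Defining the half-relaxed limits $\overline u$ and $\underline u$ of $u_\varepsilon$, it will remain to show $\overline u \leq u \leq \underline u$.

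For the upper bound $\overline u \leq u$, I would employ the subsolution
\begin{equation*}
\underline n_\varepsilon(t,x) \;:=\; c_\delta \, \min\!\Bigl\{\eta,\; \exp\!\bigl[(t - f(x) - \delta)/\varepsilon\bigr]\Bigr\},
\end{equation*}
with small constants $\eta, \delta, c_\delta > 0$ chosen so that the initial condition in~\cref{cond:n0_likeJ} forces $\underline n_\varepsilon(0,\cdot) \leq n_\varepsilon(0,\cdot)$. Using the identities $f(y_\varepsilon(x,h)) = \varepsilon f(\psi_\varepsilon(x) - h)$ and $f(x) = \varepsilon f(\psi_\varepsilon(x))$ that follow from the definition of $\psi_\varepsilon$, the nonlocal contribution on the exponential branch simplifies to a multiple of $\underline n_\varepsilon$ and the subsolution inequality reduces to
\begin{equation*}
1 \;\leq\; \frac{(J * J)(\psi_\varepsilon(x))}{J(\psi_\varepsilon(x))} \;-\; \underline n_\varepsilon(t,x).
\end{equation*}
The key fat-tailed convolution asymptotic $(J*J)(X)/J(X) \to 2$ as $X \to \infty$, which follows from~\cref{hyp:J_slowexp,hyp:J_poly}, will provide room to absorb $\underline n_\varepsilon < \eta \ll 1$ once $\varepsilon$ is small enough that $\psi_\varepsilon(x)$ is large. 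On the flat branch $\underline n_\varepsilon \equiv c_\delta \eta$, the KPP term alone gives the inequality. Comparison then yields $n_\varepsilon \geq \underline n_\varepsilon$, so $u_\varepsilon \leq f(x) - t + \delta + O(\varepsilon)$ in the exponential region and $u_\varepsilon \leq O(\varepsilon)$ in the flat region; letting $\delta, \varepsilon \to 0$ delivers $\overline u \leq u$.

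For the matching lower bound $\underline u \geq u$, I will build a symmetric supersolution of the form $\overline n_\varepsilon(t,x) := \min\bigl\{M,\, C_\delta \exp[(t - f(x) + \delta)/\varepsilon]\bigr\}$, using $n(1-n) \leq n$ to linearize the reaction. The verification reduces to the reverse convolution estimate, and matching at the interface $\{f(x) = t\}$ is free because $M \geq \max(1, \overline C)$ is trivially a supersolution of the linearized equation. This will give $n_\varepsilon \leq \overline n_\varepsilon$, hence $u_\varepsilon \geq f(x) - t - \delta - O(\varepsilon)$ in the exponential region and $u_\varepsilon \geq -\varepsilon \ln M$ in the flat region. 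Combining with the upper bound gives $\overline u = \underline u = u$, and local uniform convergence on $(0,\infty)\times\R$ follows from the continuity of the limit.

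The hard step will be making the asymptotic $(J*J)(X)/J(X) \to 2$ quantitatively uniform as $\varepsilon \to 0$ with $X = \psi_\varepsilon(x)$, and then matching the barriers across the transition interface where the truncation kicks in. Both rely crucially on the tail hypotheses~\cref{hyp:J_slowexp,hyp:J_poly}, which force the mass of $J*J$ at large $X$ to concentrate near the endpoints $h \approx 0$ and $h \approx X$, each contributing $\sim J(X)$; the upper tail bound~\cref{hyp:J_poly} guarantees integrability of the relevant convolutions while the sub-exponential lower bound~\cref{hyp:J_slowexp} controls the relative error $J(\psi_\varepsilon(x) - h)/J(\psi_\varepsilon(x))$ for bounded $h$.
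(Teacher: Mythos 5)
Your plan shares the paper's high-level strategy --- explicit sub-/super-solutions on the non-rescaled density with an ``exponential-then-flat'' profile, followed by the Hopf--Cole transform --- and indeed the paper's barrier $\phi(t,x) = (1 + e^{-t}/J(x))^{-1}$ is essentially a smooth, exact-ODE-solution version of your $\min\{\eta, e^{(t-f(x))/\varepsilon}\}$ ansatz. However, the central reduction step in your exponential-branch verification has a genuine gap, and it is more fundamental than the interface-matching you flag at the end.

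You write that the subsolution inequality on the exponential branch reduces to $1 \leq (J*J)(\psi_\varepsilon(x))/J(\psi_\varepsilon(x)) - \underline{n}_\varepsilon$, and lean on $(J*J)(X)/J(X) \to 2$. That asymptotic is correct, but it is computed for the \emph{un-truncated} exponential $e^{t}J$; for your barrier $\underline{n}_\varepsilon = c_\delta\min\{\eta, g\}$, the nonlocal operator sees both branches, and the $\min$ cuts precisely the contribution responsible for one of the two peaks giving $J*J/J \to 2$ (namely $h \approx \psi_\varepsilon(x)$, i.e.\ $y$ near the origin, where the pure exponential $e^{t/\varepsilon}J(\psi_\varepsilon(x)-h)$ would be of order $e^{t/\varepsilon}$, whereas your barrier is capped at $c_\delta\eta$). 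A short computation --- using $g(y_\varepsilon) = e^{(t-\delta)/\varepsilon}J(\psi_\varepsilon(x)-h)$ --- shows that the truncated nonlocal term satisfies
\begin{equation*}
\frac{1}{\underline{n}_\varepsilon(x)}\int J(h)\bigl[\underline{n}_\varepsilon(y_\varepsilon(x,h)) - \underline{n}_\varepsilon(x)\bigr]\,dh \;=\; \frac{\displaystyle\int_{|z|\geq R_\varepsilon}J(X-z)J(z)\,dz}{J(X)} - 1 + o(1),
\end{equation*}
which tends to $1 - 1 = 0$, not $1$, once the peak near $z = 0$ is removed. So the reduced inequality becomes $\underline{n}_\varepsilon \leq o(1)\cdot\underline{n}_\varepsilon$, which fails. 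The symmetric problem afflicts your supersolution: $(J*J)/J \to 2$ means the pure exponential $e^t J$ is not even a supersolution of the linearized equation $\partial_t n \geq J*n$ (one would need $1 \geq 2$), and truncating at $M$ only brings the effective ratio down to $\approx 1$, which is borderline; linearizing the reaction via $n(1-n) \leq n$ tightens, rather than loosens, the supersolution requirement and provides no slack.

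The paper sidesteps both problems. Its $\phi$ solves the logistic ODE $\partial_t \phi = \phi(1-\phi)$ exactly, so neither the nonlinearity nor the linearization is an issue, and $\phi$ saturates smoothly at $1$ near the origin, so the $y \approx 0$ contribution to $J*\phi$ is $\approx J(x)$, which is $e^{-t}\phi(t,x)$ and thus genuinely small relative to $\phi$ in the long-range regime --- that is the heart of why the paper can prove the much stronger statement $|J*\phi - \phi| \leq \theta(t)\phi$ with $\theta(t) \to 0$, rather than a ratio tending to a positive constant. Finally, the paper's time-dependent multiplicative correction $\exp(\pm\int_0^t \theta(s)\,ds)$ absorbs the residual nonlocal error; your fixed $\pm\delta$ shift gives only an $\varepsilon$-uniform constant factor $e^{\pm\delta/\varepsilon}$, which adjusts the initial comparison but does not help the PDE inequality, since the latter is scale-invariant. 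To repair your argument you would essentially be led to reconstruct the logistic profile and the $\theta$-correction, at which point you have the paper's proof.
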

From this convergence result, we may deduce the asymptotics of $\nep$.
\begin{theorem}\label[thm]{thm:n_conv}
Let $\nep$ be the solution of \cref{eq:nep_prop} with the initial data satisfying~\cref{cond:n0_likeJ}. If the kernel $J$ satisfies \cref{hyp:J}, then
\begin{itemize}
\item[(a)] uniformly on compact subsets of $\{ u > 0
\}$, \quad $$\lim_{\e \to 0} n_{\e} = 0;$$
\item[(b)] for every  compact subset $K\subset\Int \left( \left\lbrace u(t,x) = 0\right\rbrace \right)$, $$\lim_{\e \to 0} n_\e (t,x) = 1,$$
where the limit is uniform in $K$.
\end{itemize}
\end{theorem}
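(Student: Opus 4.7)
Since $n_\varepsilon = \exp(-u_\varepsilon/\varepsilon)$, part (a) follows at once from the Hopf--Cole relation. On a compact $K \subset \{u > 0\}$, the continuous function $u$ attains a positive minimum $c > 0$, so \cref{thm:u_conv} ensures $u_\varepsilon \ge c/2$ uniformly on $K$ for $\varepsilon$ small enough; hence $n_\varepsilon \le e^{-c/(2\varepsilon)} \to 0$ uniformly on $K$.

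\textbf{Part (b)} is more delicate, since on $\mathrm{Int}(\{u=0\})$ the bound $u_\varepsilon \to 0$ alone does \emph{not} force $n_\varepsilon \to 1$: $u_\varepsilon$ could a priori decay slower than $\varepsilon$ and leave $n_\varepsilon = e^{-u_\varepsilon/\varepsilon}$ arbitrarily small. To close the gap, I will exploit the Fisher--KPP reaction. Fix $(t_0, x_0) \in \mathrm{Int}(\{u=0\})$ and choose $\delta, \rho > 0$ so that $[t_0-\delta, t_0] \times \overline{B}_\rho(x_0) \subset \mathrm{Int}(\{u=0\})$. Then \cref{thm:u_conv} gives $u_\varepsilon \to 0$ uniformly on this cylinder, whence for any fixed $\eta \in (0, \delta)$ and $\varepsilon$ small enough,
\[
    n_\varepsilon(t_0 - \delta, x) \ge e^{-\eta/\varepsilon}, \quad x \in \overline{B}_\rho(x_0).
\]

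Starting from this lower bound, I construct a compactly supported sub-solution of~\cref{eq:nep_prop} on $[t_0-\delta, t_0] \times \R$ of the form $\underline{n}_\varepsilon(t, x) = v(t)\, \chi(x - x_0)$, where $\chi$ is a smooth non-negative cutoff supported in $B_\rho(0)$ with $\chi(0) = 1$. The key observation is that the non-local operator applied to $\chi(\cdot - x_0)$ is $o_\varepsilon(1)$ uniformly on $\overline{B}_\rho(x_0)$: for each fixed $h \in \R$, $\psi_\varepsilon^{-1}(\psi_\varepsilon(x) - h) \to x$ as $\varepsilon \to 0$, because $(\psi_\varepsilon^{-1})'(\psi_\varepsilon(x)) = \varepsilon f'(\psi_\varepsilon(x)) / f'(x) \to 0$ under \cref{hyp:J}, and dominated convergence applies since the integrand is bounded by $2 \|\chi\|_\infty J(h) \in L^1(\R)$. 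Consequently, $v$ can be chosen to satisfy the logistic ODE $\varepsilon v' = v(1 - v) - \omega(\varepsilon)\, v$ with $\omega(\varepsilon) \to 0$ and $v(t_0 - \delta) = e^{-\eta/\varepsilon}$; an explicit computation then gives $v(t_0) \to 1$ whenever $\eta < \delta$. The comparison principle for~\cref{eq:nep_prop} yields $n_\varepsilon(t_0, x_0) \ge v(t_0) \to 1$, and a matching upper bound $\limsup n_\varepsilon(t_0, x_0) \le 1$ follows from an analogous super-solution argument (or from the long-time stability of the KPP equilibrium $1$ deep behind the rescaled front). Uniformity on $K$ is then automatic since all estimates depend only on the compact geometry of $K$.

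The main technical obstacle is verifying that the non-local error $\omega(\varepsilon)$ vanishes \emph{uniformly} in $x$ over the support of $\chi$, especially near the boundary of that support where the operator may pick up positive contributions from shifted points in the plateau. This step rests on the fine structure of $\psi_\varepsilon$ dictated by the tail bounds~\cref{hyp:J_slowexp}--\cref{hyp:J_poly}.
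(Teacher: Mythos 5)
Your argument is exactly the paper's: positivity of $u$ on the compact, locally uniform convergence of $u_\e$ from \Cref{thm:u_conv}, and the exponential Hopf--Cole relation. No issue here.

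\textbf{Part (b).} Here you take a genuinely different route from the paper. You work entirely at the level of $n_\e$, building a compactly supported spatial sub-solution $v(t)\chi(x-x_0)$ seeded on a plateau where $n_\e \geq e^{-\eta/\e}$, and letting the logistic ODE push $v$ up to $1$. The paper instead works at the Hopf--Cole level: it picks a test function $\chi_\e(t,x) = -\e\ln\phi\left(\frac{t_0-\delta_0}{\e},\psi_\e(x)\right) + (t-t_0)^2$ (where $\phi$ is the logistic-wave function from \Cref{prop:subsuper}), exploits the maximum of $u_\e - \chi_\e$ to drop the sign-definite terms in the exponential, and is left with a nonlocal term that is \emph{exactly} $(J*\phi-\phi)/\phi$, which is bounded by $\theta$ thanks to \Cref{prop:subsuper}. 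The key advantage of the paper's construction is that it reuses the already-established nonlocal estimate and sidesteps the boundary behavior of a generic cutoff.

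The gap in your proposal is precisely this boundary behavior, though you identify it with the wrong sign. You write that the operator may ``pick up positive contributions from shifted points in the plateau'' near the boundary of $\mathrm{supp}\,\chi$, but positive contributions would help the sub-solution inequality. The real difficulty is the opposite: near $\partial B_\rho(0)$ the shifted points $\psi_\e^{-1}(\psi_\e(x)-h)$ can land outside the support, so the nonlocal term $\mathcal L[\chi](x)=\int J(h)\bigl[\chi(\psi_\e^{-1}(\psi_\e(x)-h))-\chi(x)\bigr]\,dh$ is generically \emph{negative} there. Your uniform estimate gives $|\mathcal L[\chi](x)|\leq \omega(\e)$ with $\omega(\e)\to 0$, but this is an absolute bound, not a bound proportional to $\chi(x)$. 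The sub-solution inequality for $v\chi$ requires
\[
	\e \frac{v'}{v}\,\chi(x) \;\leq\; \mathcal L[\chi](x) + \chi(x)\bigl(1 - v\chi(x)\bigr),
\]
and when $0<\chi(x)<\omega(\e)$ the right-hand side can be negative of order $\omega(\e)$ while the left-hand side is only of order $\chi(x)$, so the inequality fails. The absorption into a modified reaction rate ``$-\omega(\e)v$'' would only be legitimate if the nonlocal error were $\lesssim\omega(\e)\chi(x)$, which your argument does not establish. To repair the idea one would need either a cutoff with $\e$-dependent geometry (slowly decaying tails chosen to dominate the nonlocal loss) or a comparison with the true $n_\e$ on an enlarged plateau rather than with $0$ -- both of which reintroduce essentially the nonlocal estimates that Proposition \ref{prop:subsuper} already packages. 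As written, the sub-solution step is not justified.
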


Since $f$ is a continuous and increasing function of \!\! $\vert x \vert$, the boundary of $\left\lbrace u(t,x) = 0\right\rbrace$ is given by $\vert x \vert = f^{-1}(t)$.  Hence, as $\e\to 0$, $n_\e \sim 1$ if and only if $|x| < f^{-1}(t)$.   Since $n(t,x) \sim n_\e(1,\psi_\e^{-1}(x))$ with $\e= 1/t$, then as $t \to \infty$ we see that $n(t,x) \sim 1$ if and only if $|x| < f^{-1}(t)$.  As such, \Cref{thm:u_conv} and \Cref{thm:n_conv} imply that the location of the front of $n$ is $\sim f^{-1}(t)$.

 Let us apply our two main results \Cref{thm:u_conv} and \Cref{thm:n_conv} to our basic examples.
\begin{example}
When $f$ is a sub--exponential kernel of the form $f(x) = (1 + |x|^2)^{\alpha/2}$ with $\alpha \in (0,1)$, we see that the front is located at $\sim t^{1/\alpha}$.  In the thin-tailed limit $\alpha \to 1$ see recover constant speed propagation.

On the other hand, when $f$ is a polynomial kernel of the form $f(x)= (1+\alpha)\ln(1+ \vert x \vert^2)/2$,   with $\alpha > 0$, we see that the front is located at  $\sim e^{t/(1+\alpha)}$.
\end{example}

In \cref{thm:u_conv} the dispersion kernel $J$ disappears when we pass to the limit $\e \to 0$, in the sense that the constrained Hamilton--Jacobi equation satisfied by the limit function $u$ is simply $\min\left\{ \partial_t u +1 , u \right\}=0$, in which $J$ is absent. Solutions to this equation are given by $\max\{u_0(\cdot) - t, 0\},$ so that the effect of the kernel is felt only through the initial data.  Without the \cref{cond:n0_likeJ}, a boundary layer at $t=0$ would develop during the limit $\e \to 0$.  See \cref{rem:J}.

 This is quite different from the case of a thin tailed kernel, for which the Hamiltonian would typically contain a term of the form $\hat J - 1$~\cite{diekmann2005,PerSou05,barles2009}. This is explained by our rescaling $\psi_\e$ which focuses on the behavior at infinity of the solution, and thus mainly ignores the precise dynamics of the dispersion. This phenomenon was already observed in \cite{SMSM} for the fractional Laplacian. Despite this, \Cref{thm:n_conv} states that our rescaling is sharp enough to capture the interface at infinity.

One way to understand intuitively why the kernel disappears in the limiting equation is to investigate the integral term in \cref{eq:uep_prop}. Due to the fat--tailed assumption in \Cref{hyp:J}, the quantity $\psi_\e^{-1}\left( \psi_\e (x) + h \right)- \psi_\e^{-1} (\psi_\e (x))$ is likely to go to zero faster than $\e$. Hence, the integral disappears in the limit.
While this is formally clear, it is difficult to make this intuition rigorous.

We comment momentarily on the method of proof.  We construct explicit sub- and super-solutions of $u$ using the kernel $J$ and the general solution to the logistic equation.  While the most natural thing to do would be to use half relaxed limits along with the limiting Hamilton-Jacobi equation (see \cite{barles1990comparison}), the non--locality of the kernel makes this very difficult because the non-local term in the equation ``sees'' all of $\R$ but the half-relaxed limits only provide convergence locally.  Thus, as in \cite{SMSM}, we construct sharp sub- and super--solutions of~\cref{eq:uep_prop} to conclude. The construction of these sub- and super--solutions also provides sharp sub- and super--solutions for equation~\cref{eq:main}. We point out that \cref{thm:n_conv} improves the existing bounds in~\cite{jimmy2011}.

To illustrate the results of \Cref{thm:u_conv}, we provide the results of some numerical simulations in \Cref{fig:Shape} for four choices of kernels $J$: a Gaussian kernel for which linear spreading is expected \cite{coville2005}, two sub-exponential kernels $J \sim \exp(- \vert \cdot \vert^{1/2})$, $J \sim \exp(- \vert \cdot \vert^{3/4})$ and a polynomial one $J \sim \left( 1 + \vert \cdot \vert^5 \right)^{-1}$.

\begin{figure}[!h]
\begin{center}
\includegraphics[width = .45\linewidth]{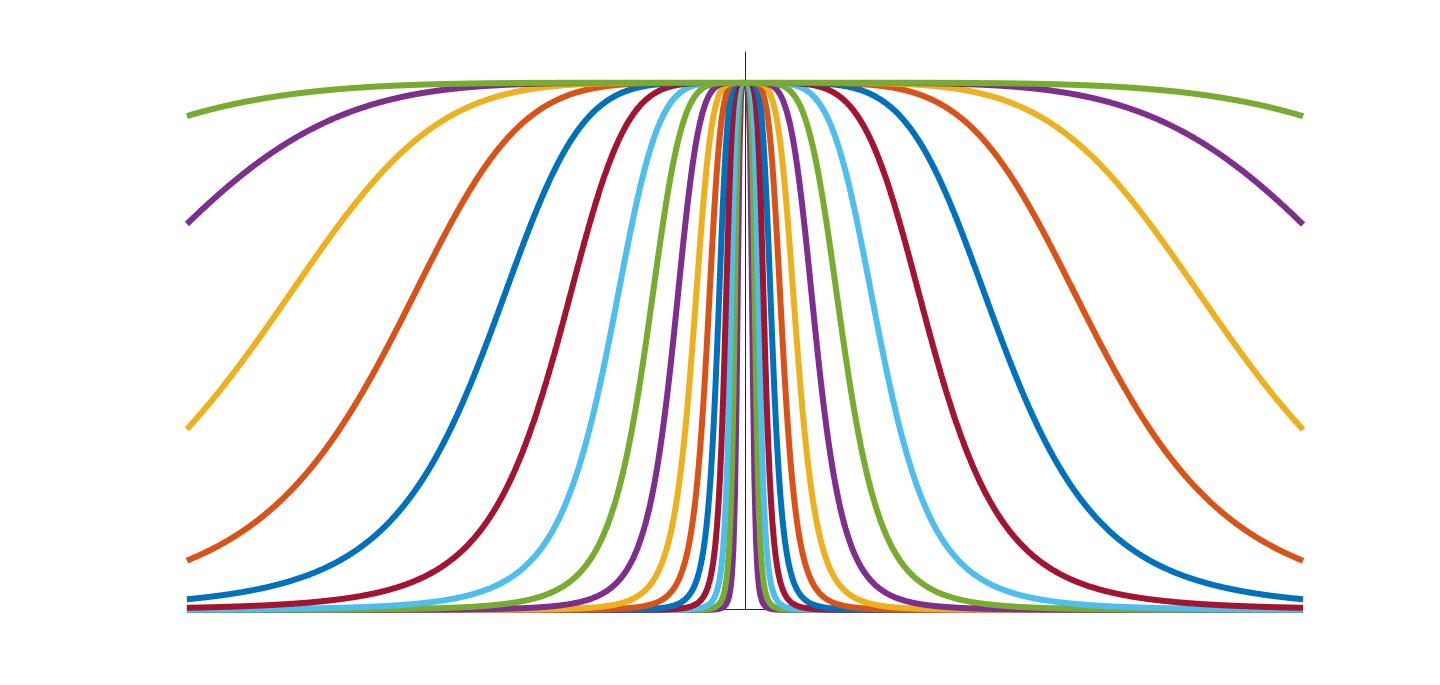}
\includegraphics[width = .45\linewidth]{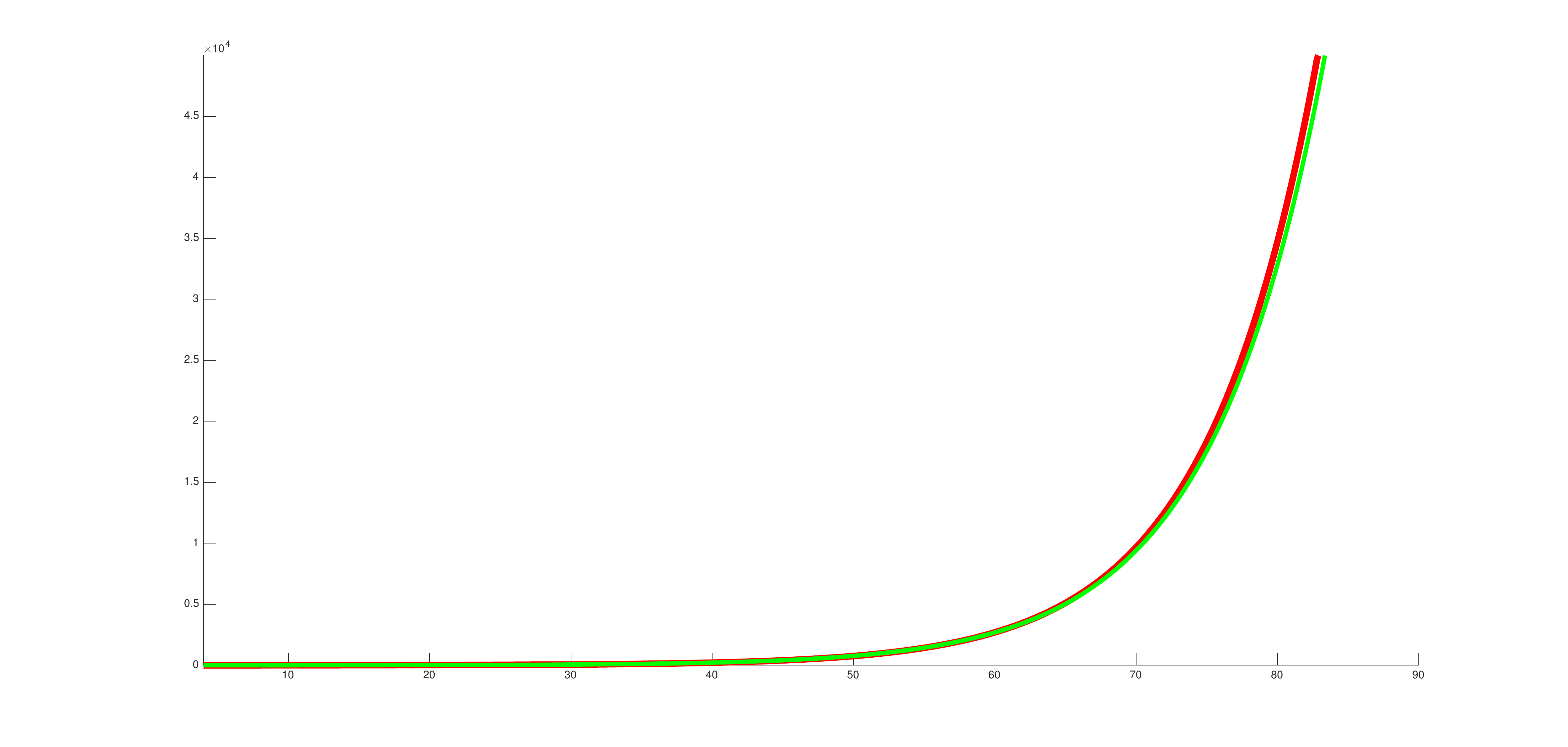}
\\
\includegraphics[width = .45\linewidth]{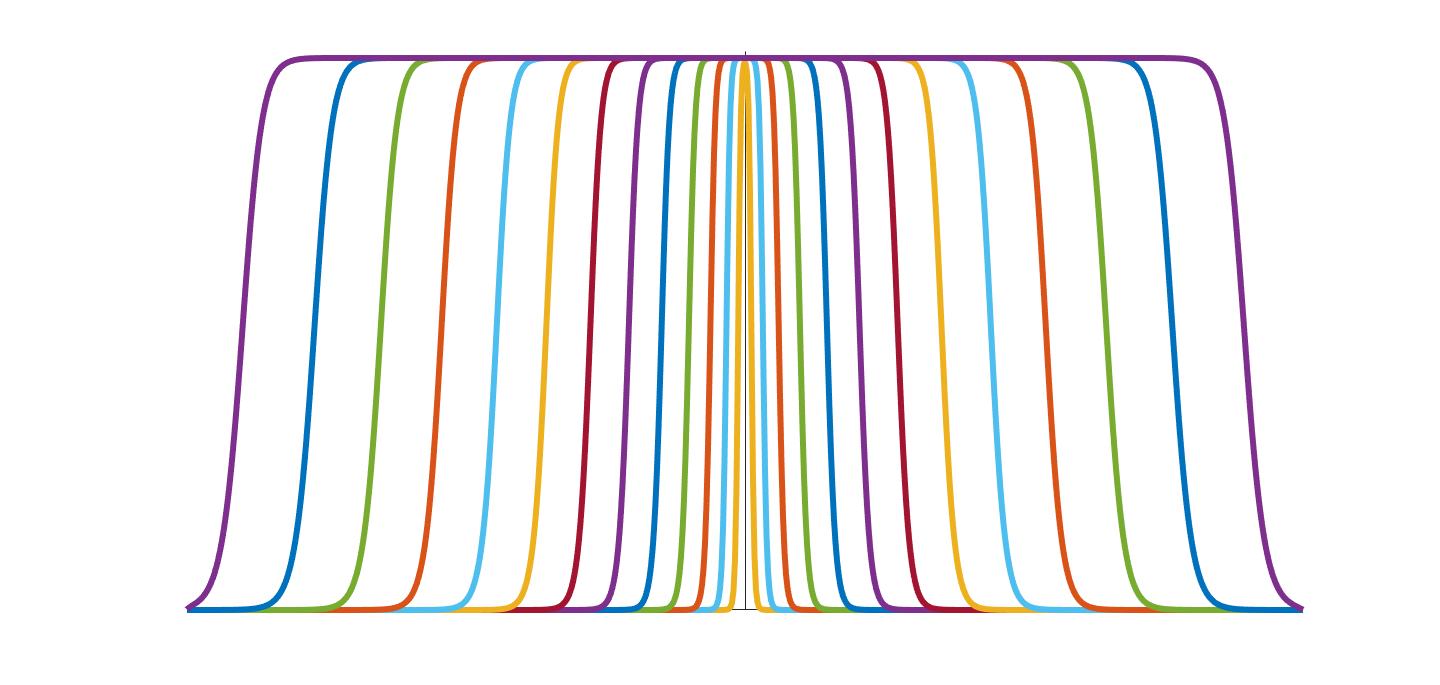}
\includegraphics[width = .45\linewidth]{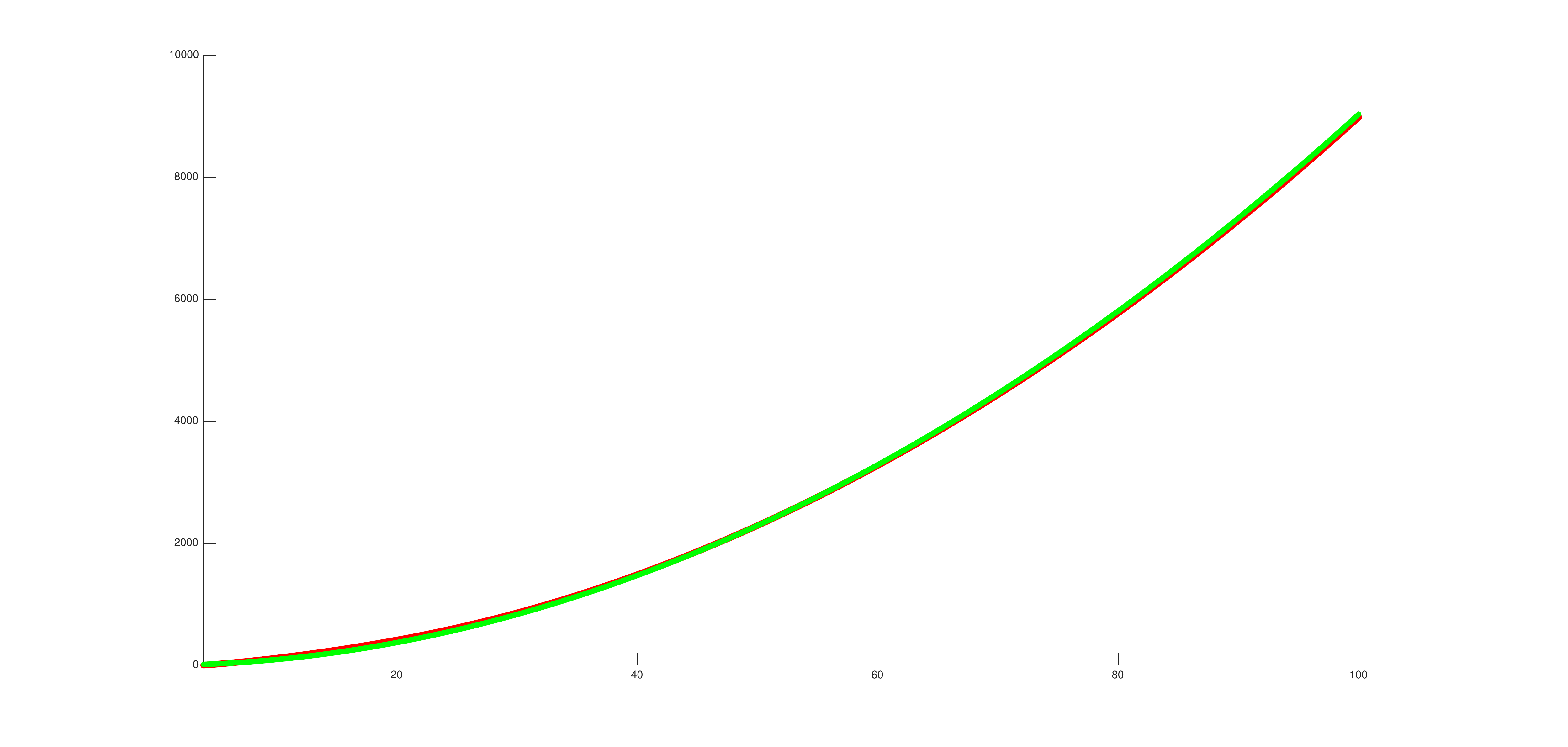}
\\
\includegraphics[width = .45\linewidth]{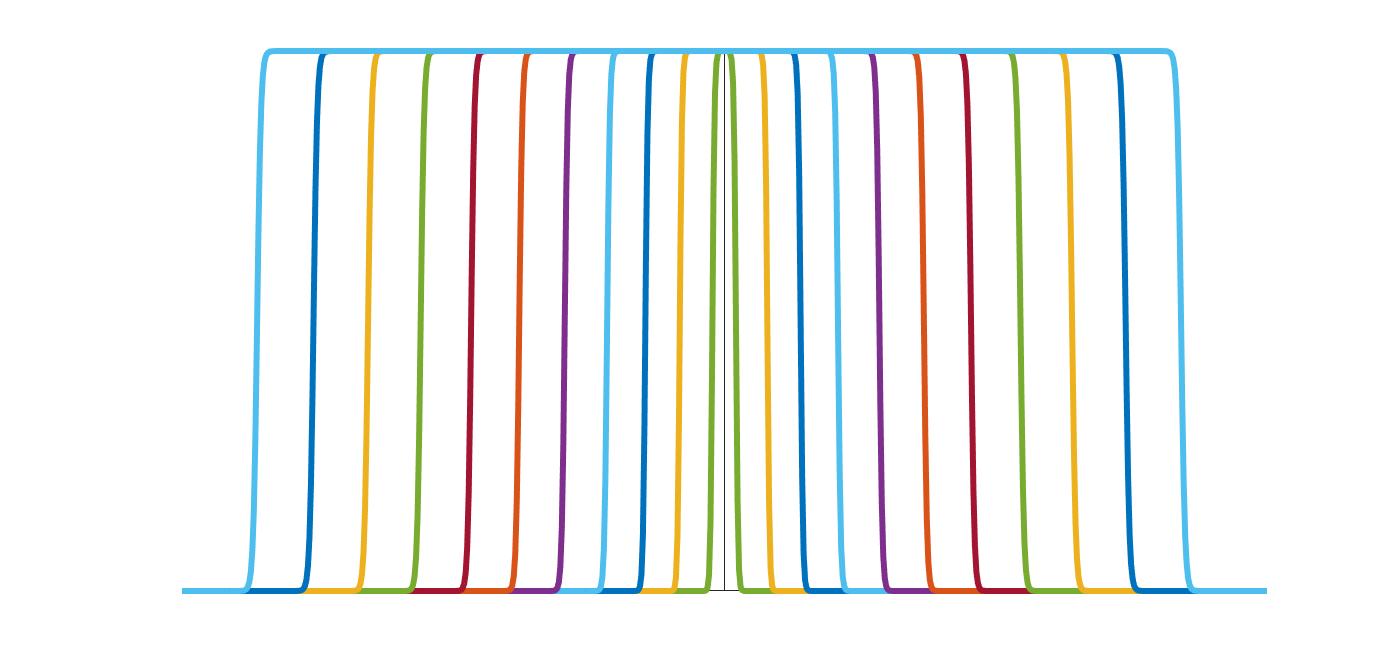}
\includegraphics[width = .45\linewidth]{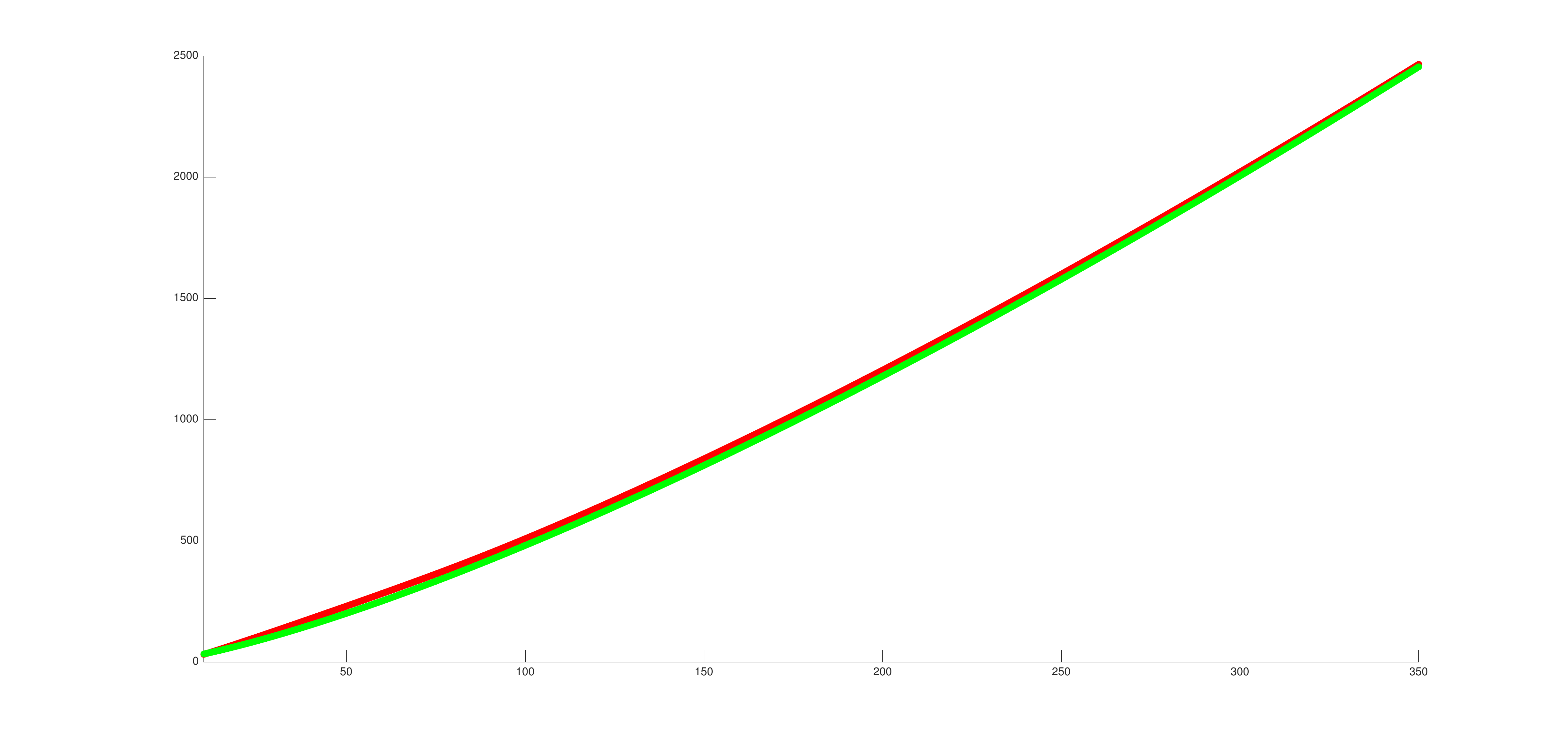}
\\
\includegraphics[width = .45\linewidth]{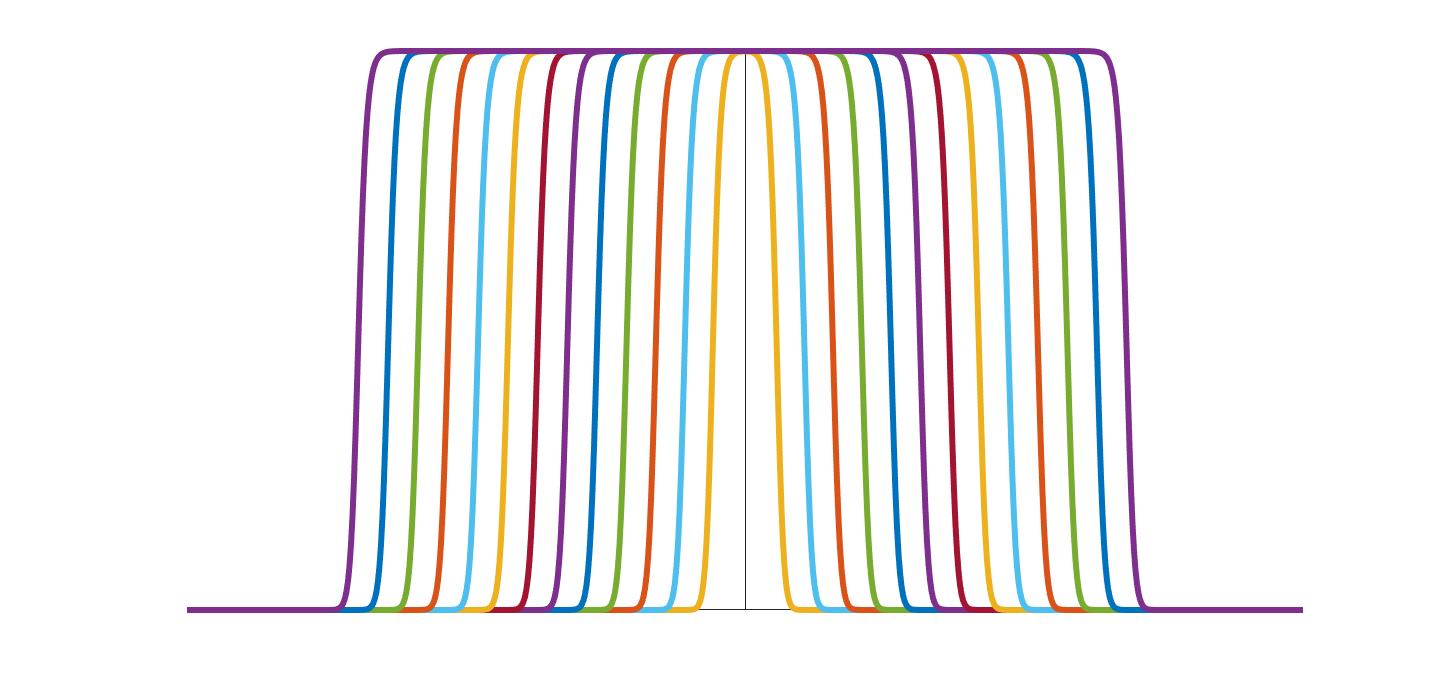}
\includegraphics[width = .45\linewidth]{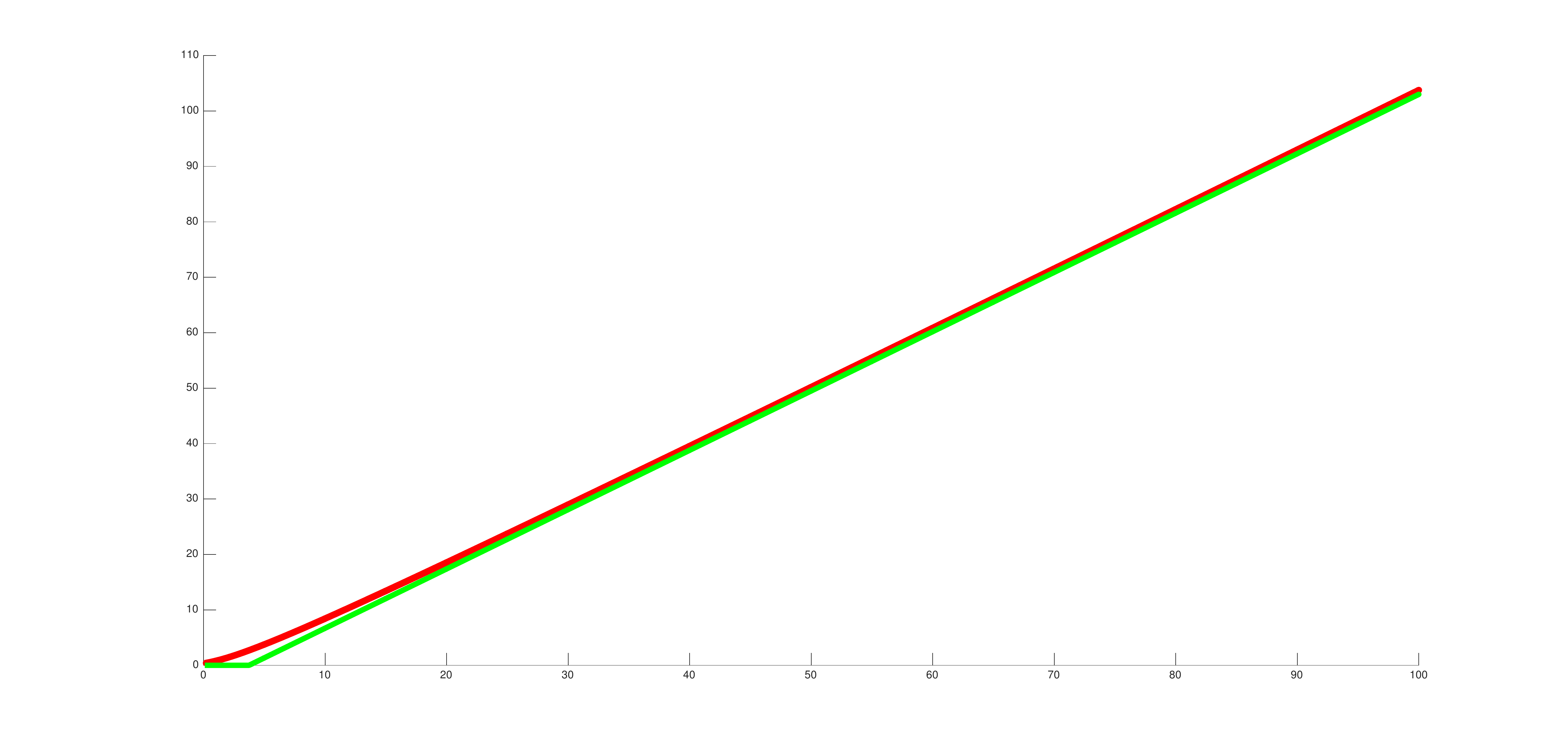}

\caption{Numerical simulations of the Cauchy problem \eqref{eq:main}, for various kernels $J$. The first line corresponds to $J \sim \vert x \vert^{-5}$, the second line to $J \sim \exp\left( - \vert x \vert^{1/2} \right)$, the third line to $J \sim \exp\left( - \vert x \vert^{3/4} \right)$ and the last line to a Gaussian kernel $J \sim \exp\left( - \vert x \vert^{2} \right)$. 
For each kernel, we present in the left column the evolution of the solution by plotting it on the same figure for various successive (linearly chosen) values of time $t$.  To quantify this and recover and illustrate \Cref{thm:n_conv}, we present in the right column the time evolution of the level set $\{x \in \R: n(t,x)=1/2\}$ for each kernel. The red bold curve is the numerical simulation, starting from an initial condition of the form $J$. The green curve is the expected asymptotic rate of expansion predicted by \Cref{thm:u_conv}, that is $J^{-1}(e^{-t})$, except for the Gaussian kernel, in which case it is a line.    Each kernel is successively more thin--tailed, yielding an interpolation between the obvious acceleration for the first kernel and the linear propagation for the last kernel.}
\label{fig:Shape}
\end{center}
\end{figure}

\subsection*{The small mutations limit}

Our main equation~\cref{eq:main} also arises naturally in the context of population genetics, to capture the effect of genetic mutations \cite{barles2009,PerBar08}. Under this perspective, the variable $x$ now corresponds to a phenotypic trait and the convolution term describes the mutation process during which an individual with trait $x$ can give birth to an individual with trait $x+h$ with probability $J(h)$.

We are interested in a situation where large--effect mutations, while still uncommon, are {\em relatively} frequent. This is exactly what is encoded in a mutation kernel with fat--tails. The aim of this section is to understand the effect of these large mutation events on the adaptive dynamics when the mean effect of mutations is small. This regime of small mean--effect of mutations will be referred as the small mutation regime. Note that even in the small mutation regime, mutation events with a large effect can occur. The main difficulty is to identify the appropriate scaling of this small mutation regime when the mutation kernel has fat tails.

To work in the small mutation regime, we introduce a small parameter $\epsilon$, such that $\epsilon^{-1}$ typically represents the time-scale on which these mutations accumulate. This time scale being given, one needs to scale the size of the mutations in a relevant way to capture the expected (non-trivial) dynamics.

In the case of a thin--tailed mutation kernel, the small mutation regime corresponds to mutation kernel with small variance of order $\epsilon^2.$ Thus, it is natural to rewrite the mutation kernel $J(h)$ as $J_\epsilon(h):=J(h/\epsilon)/\epsilon$ where now $J$ is of variance $\sigma^2$.  Precisely the variance of $J_\e$ is equal to $\sigma^2 \e^2$. With such transformation, the jump $x \mapsto x+h$ with probability $J(h)$ is replaced by the jump $x \mapsto x+\epsilon h$ with the same probability $J(h)$. In this case, the asymptotic behavior of the population is described by a Hamilton--Jacobi equation~\cite{barles2009,diekmann2005,PerBar08}.

In the fat--tailed setting, however, the large mutation events modify the dynamics.  As a consequence, it is necessary to rescale the jump size non-linearly to take this into account as the scaling above does not contract the kernel enough. Indeed, if the size of jumps is of order $\e$ it does not go to zero fast enough to be comparable with the contraction of the kernel. Inspired by our first part about propagation, we replace the jumps $x\mapsto x + h$ by $x \mapsto x + \psi_\epsilon^{-1}(h)$ with the same probability, where
$$\psi_\eps(h) = \sign(h)f^{-1}\left( \e f(h)	 \right)\,.$$ 
Our jump size scaling procedure is actually equivalent to rewrite the mutation kernel $J$. The mutation kernel $J$ is transformed into $J_\epsilon$ defined by
\begin{equation}\label{eq:rscl_J}
J(h)=e^{-f(h)} \longmapsto J_\varepsilon (h)= J(\psi_\e)\psi_\e'(x)=\dfrac{1}{\e} \frac{f'(h)}{f'\left(\psi_\e^{-1}(h)\right)} J(h)^{\frac1\e}.
\end{equation}
Thus, with fat--tailed mutation kernel, the small mutation regime corresponds to when the following quantity is rescaled by $\epsilon^2$:

$$\int_\R f(x)^2 J_\epsilon(x)dx =\epsilon^2 \left( \ds \int_\R f(x)^2 J(x)dx \right)$$
Observe that in the case of the small mutation regime of a thin tailed kernel, this quantity would be exactly the variance of $J_\eps$.
From the formula~\cref{eq:rscl_J}, we can observe that $J_\epsilon$ is a contraction of $J$.

Due to the small size of the mutations, their effect can only be seen after many mutations accumulate.  Hence, we want to capture the long time behavior of the population, or, equivalently, the setting where the rate of mutation is large.  This suggests that we rescale the time by the parameter $\epsilon$ as  $t\mapsto t/\epsilon$. Under this rescaling and the rewriting of the mutation~\cref{eq:rscl_J}, equation~\cref{eq:main} becomes:
\begin{equation}\label{eq:nep_mut}
\e\partial_t \nep(t,x)=\int_\R \left(\nep(t,x-\psi_\e^{-1}(h)) -\nep (t,x)\right) J(|h|) dh  +\nep(1-\nep).
\end{equation}
As in the propagation regime, the scaled size of jumps goes to $0$ as $\epsilon\to0.$ So we expect the solution $\nep$ to concentrate. In order to capture this concentration phenomenon, we perform the logarithmic Hopf--Cole transformation, $u_\epsilon:=-\epsilon\ln(\nep)$ and $u_\e$ satisfies the following equation:
\begin{equation}\label{eq:uep_mut}
\begin{cases}
   \partial_t \uep(t,x) +  \int_\R e^{-\frac{1}{\e}\left(\uep\left( t, x - \psi_\eps^{-1}(h)  \right)-\uep(t,x) \right)} J(h) \dd h  = \nep(t,x), \quad &\hbox{ on } \ (0,+\infty)\times\R \\
\uep(t,\cdot)=\uep^0.
\end{cases}
\end{equation}

Before stating our main results, we need the following additional technical assumption on the derivative of $f = -\ln(J)$ at $x=0.$ 
\begin{hyp}\label[hyp]{hyp:Jmut}
Assume that $f \in C^2(0,\infty)$ and that $f$ satisfies $\lim_{x\searrow 0} f(x)/x \in (0,\infty)$. We abuse notation by denoting it $f'(0)$.  Additionally, $f$ satisfies all the assumptions of \Cref{hyp:J} except for regularity at zero.  Namely, $f\in C^0(\R)$ but $f$ is not $C^2$ at $0$.
\end{hyp}

We also require additional assumptions on the initial data $u_\epsilon^0 = -\epsilon\ln(\nep^0).$  We assume that $u_\e^0$ is a positive sequence of Lipschitz continuous functions which converges locally uniformly to $u^0$ as $\e \to 0$ and there exists $A\in(0,1-1/\mu)$ where, we recall, $\mu:=\liminf_{x\to+\infty}{|x|f'(|x|)}>1$ such that for all $x, h \in \R$:
\begin{equation}\label[hyp]{cond:uep0_mutation}
u_\e^0(x+h)-u_\e^0(x)\geq -Af(h).
\end{equation}
Note that $(u_\e^0)_\e$ is thus uniformly locally bounded and $\nep^0=\exp(-u^0_\e/\e)$ satisfies, for all $x \in \R$ and $\e>0$,
\begin{equation}\label[hyp]{cond:nep0_boundmut}
0 \leq \nep^0(x) \leq 1.
\end{equation}
From the maximum principle, we have that $0 < n_\e(t,x) < 1$ for all $x \in \R$ and $t>0$. Moreover, the property~\cref{cond:uep0_mutation} propagates for any positive time -- see the following lemma.

\begin{lemma}\label[lem]{lem:estim_uep}
Let $f$ satisfy \Cref{hyp:Jmut}. Then any solution $u_\e$ of~\cref{eq:uep_mut} starting with initial condition $u^0_\epsilon$ satisfying~\cref{cond:uep0_mutation}, satisfies the following properties:
\begin{enumerate}
\item the sequence $(\uep)_\e$ is locally uniformly bounded. In particular, there exists $r > 0$ such that, for all $x \in \R$ and $t\in (0,\infty)$,

\begin{equation*}
- rt \leq u_\e(t,x) - u_\e^0(x) \leq t;
\end{equation*}
\item for all $t \geq 0$ and all $x,h \in \R$, we have:
\begin{equation}\label{ineq:estim_ueph}
u_\e(t,x+h)-u_\e(t,x)\geq -A f( |h| ). 
\end{equation}
In particular $\uep$ is uniformly Lipschitz with respect to $x$ with the bound
\begin{equation}\label{ineq:graduep}
\left\| \partial_x  \uep\right\|  _{L^\infty((0,+\infty) \times\R )} \leq Af'(0).
\end{equation}
\end{enumerate}
\end{lemma}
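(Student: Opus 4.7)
The lemma has three pieces I would treat in order: the simple upper bound $u_\e\leq u_\e^0+t$; the propagation of the one-sided estimate \eqref{ineq:estim_ueph}, which is the main technical step; and then the gradient bound \eqref{ineq:graduep} and the lower bound $u_\e\geq u_\e^0-rt$, both as consequences.

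Writing \eqref{eq:uep_mut} as $\partial_t u_\e = n_\e - I(t,x)$ with $I(t,x):=\int e^{-(u_\e(t,x-\psi_\e^{-1}(h))-u_\e(t,x))/\e}J(h)\,dh\geq0$, the maximum principle on the $n_\e$-equation gives $n_\e\leq 1$ (since $n_\e\equiv 1$ is a super-solution), so $\partial_t u_\e\leq 1$; integration yields the upper bound.

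For the main step, I would rephrase \eqref{ineq:estim_ueph} at the level of $n_\e$: it is equivalent to $G(t,x,y):=n_\e(t,y)-n_\e(t,x)\,e^{Af(|y-x|)/\e}\leq0$ for all $t,x,y$, which holds at $t=0$ by \eqref{cond:uep0_mutation}. Assume for contradiction that $\sup G$ becomes positive, let $t^*$ be the infimum of such times, and (after a penalization such as $\delta(|x|^2+|y|^2)$ later sent to $0$) assume the supremum is attained at some $(x^*,y^*)$ with $x^*\neq y^*$ and $G(t^*,x^*,y^*)=0$. The crucial observation is that the distance $|y^*-x^*|$ is \emph{invariant} under common shifts $z\mapsto z-\psi_\e^{-1}(h')$, so the penalty factor $e^{Af(|y^*-x^*|)/\e}$ factors out of the non-local part; the evolution of $G$ at $(t^*,x^*,y^*)$ then reads
\[
\e\,\partial_t G \;=\; \int_\R G\bigl(t^*,x^*-\psi_\e^{-1}(h'),y^*-\psi_\e^{-1}(h')\bigr)\,J(h')\,dh' \;-\; G(t^*,x^*,y^*) \;+\; \mathcal R,
\]
where the integral is $\leq 0$ by maximality and where, using the saturation $n_\e(t^*,y^*)=n_\e(t^*,x^*)\,e^{Af(|y^*-x^*|)/\e}$, the reaction contribution computes to $\mathcal R=e^{Af(|y^*-x^*|)/\e}n_\e(t^*,x^*)^2\bigl(1-e^{Af(|y^*-x^*|)/\e}\bigr)<0$. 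This yields $\partial_t G(t^*,x^*,y^*)<0$, contradicting the first-touching condition $\partial_t G\geq 0$.

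Given \eqref{ineq:estim_ueph}, the gradient bound \eqref{ineq:graduep} follows by dividing by $|h|>0$ and letting $h\to 0^\pm$, using $f(h)/h\to f'(0)$ from \cref{hyp:Jmut}. For the lower bound on $u_\e$, I would use \eqref{ineq:estim_ueph} to control $I$: the identity $f(|\psi_\e^{-1}(h)|)=\e f(|h|)$ built into the definition of $\psi_\e$ gives
\[
I(t,x)\;\leq\;\int_\R e^{A f(|h|)}J(h)\,dh\;=\;\int_\R J(h)^{1-A}\,dh\;=:\;r,
\]
and by \eqref{hyp:J_poly}, $J^{1-A}\sim |h|^{-\mu(1-A)}$ at infinity, so this integral is finite \emph{precisely} under the standing hypothesis $A<1-1/\mu$. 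Hence $\partial_t u_\e\geq -r$ and the lower bound follows. The hard part is making the penalization in the propagation step rigorous: the naive quadratic penalty creates contributions like $e^{\delta|\psi_\e^{-1}(h')|^2/\e}$ inside the non-local integral that must be controlled uniformly as $\delta\to 0$, which I would handle either by using a bounded penalty such as $\delta[\arctan|x|+\arctan|y|]$ that still forces the extremum into a compact set, or by localizing $t$ on short intervals and iterating.
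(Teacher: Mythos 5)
Your proposal follows the same overall strategy as the paper's proof: bound $\partial_t u_\e \leq 1$ via $n_\e \leq 1$ for the upper estimate; propagate the finite-difference inequality \eqref{ineq:estim_ueph} by a maximum-principle argument and deduce \eqref{ineq:graduep} by sending $h\to 0$ with $f(h)/h\to f'(0)$; and use the scaling identity $f(|\text{jump}|)=\e f(|h|)$ together with the integrability of $e^{Af}J=J^{1-A}$ (which is where the hypothesis $A<1-1/\mu$ from \eqref{hyp:J_poly} is used) to get the constant-in-$t$ lower bound with $r=\int_\R e^{Af(h)}J(h)\,\dd h$. The one structural difference is in how the sliding argument for \eqref{ineq:estim_ueph} is packaged: the paper fixes $h$, sets $w_{\e,h}(t,x):=u_\e(t,x)-u_\e(t,x+h)$, writes the exact nonlocal evolution equation satisfied by $w_{\e,h}$ in $(t,x)$, and applies the maximum principle to get $\sup_x w_{\e,h}(t,\cdot)\leq\max\{0,\sup_x w_{\e,h}(0,\cdot)\}\leq Af(|h|)$; you unify the $h$-parametrized family into the two-variable function $G(t,x,y)=n_\e(t,y)-n_\e(t,x)e^{Af(|y-x|)/\e}$ at the $n_\e$ level and run a first-touching contradiction. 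Under Hopf--Cole these are equivalent, and your key observations — that the common translation by the scaled jump leaves $|y-x|$ invariant, so the penalty factor commutes with the nonlocal operator, and that the reaction contribution at a saturating off-diagonal point equals $n_\e(x^*)^2\beta(1-\beta)<0$ — are correct and are exactly what the paper's equation for $w_{\e,h}$ encodes, with the term $(1-e^{w_{\e,h}/\e})n_\e$ playing the role of your $\mathcal R$. The loose end you flag (the supremum is over an unbounded set, so the touching point may not exist and a penalization must be controlled in the nonlocal term) is genuine, but it is a shared loose end: the paper also invokes ``the maximum principle'' for this nonlocal equation on $\R$ without discussing attainment. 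So your worry is well-placed rather than a defect unique to your route.
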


The local uniform estimates of~\Cref{lem:estim_uep} allows us to define the following upper- and lower--half--relaxed limits of $u_\e$ by the following formulas:
\begin{equation}\label{eq:half_relaxed}
	\underline u(t,x):= \liminf_{\underset{(s,y)\to (t,x)}{\e\to 0}} \ u_\e(s,y)
	\qquad \text{ and }\qquad
	\overline u(t,x):= \limsup_{\underset{(s,y)\to (t,x)}{\e\to 0}} \ u_\e(s,y)\,.
\end{equation}
From the properties of half-relaxed limits, the estimates~\cref{ineq:estim_ueph} and~\cref{ineq:graduep} hold true for the functions $\underline u$ and $\overline u$.  In addition, it is apparent that $\underline u \leq \overline u$ in $(0,\infty)\times\R$ by construction. With this sub- and super--solution in hands, we can state our main results on the asymptotics of $u_\e$ and $n_\e$ as $\epsilon\to0.$

\begin{theorem}\label[thm]{theo:mut}
Let $f$ satisfy \Cref{hyp:Jmut} and let $\uep$ be the solution of~\cref{eq:uep_mut} starting with initial condition $u^0_\epsilon$ satisfying~\cref{cond:uep0_mutation}. Then as $\e \to 0$ we have the following:
\begin{itemize}
\item[i)] the upper (resp. lower) half--relaxed limit $\overline u$ (resp. \underline u) is a sub- (resp. super-) solution to the following constrained Hamilton--Jacobi equation:
\begin{equation*} 
	\min\left\{ \partial_t u + \int_\R \left[e^{\frac{\sign(h) f( h )}{f'(0)}\partial_x u} - 1\right] J(h)  \, \dd h + 1   , u \right\}=0, \ \hbox{ on } \ (0,+\infty) \times \R;
\end{equation*}
\item[ii)] the sequence $(\uep)_\e$ converges locally uniformly on $(0,+\infty) \times \R$ to a function $u$ that is Lipschitz continuous with respect to $x$ and continuous in time, and which is a viscosity solution to
\end{itemize}
\begin{equation}\label{eq:ulim}
\begin{cases}
\min\left\{ \partial_t u  + \displaystyle\int_\R \left[ e^{\frac{\sign(h) f( h )}{f'(0)}\partial_x u}-1\right] J(h)  \, \dd h + 1  , u \right\}=0, \ \hbox{ on } (0,+\infty) \times \R, \medskip\\
u(0,\cdot) = u^0.
\end{cases}
\end{equation}
\end{theorem}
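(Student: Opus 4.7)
The strategy is the classical half-relaxed limits method of Evans--Souganidis, adapted to the nonlocal setting much as in M\'el\'eard--Mirrahimi's work on the fractional Laplacian. For part (i), I would verify directly that $\overline{u}$ is a viscosity subsolution and $\underline{u}$ a supersolution of the constrained Hamilton--Jacobi equation~\eqref{eq:ulim}. For part (ii), I would invoke a comparison principle for~\eqref{eq:ulim} to conclude $\overline{u}\le \underline{u}$, which combined with $\underline{u}\le \overline{u}$ (true by construction) forces $\overline{u}=\underline{u}=:u$, giving local uniform convergence of $(u_\e)$ and identifying $u$ as the unique viscosity solution. The initial trace $u(0,\cdot)=u^0$ is inherited from the assumed locally uniform convergence $u_\e^0\to u^0$.

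\textbf{Limit of the nonlocal operator.} Denote the nonlocal operator in~\eqref{eq:uep_mut} by $K_\e[u_\e](t,x)=\int_\R e^{-(u_\e(t,x-\psi_\e^{-1}(h))-u_\e(t,x))/\e}J(h)\,\dd h$. For a smooth test function $\varphi$ and an approximating pair $(t_\e,x_\e)\to(t_0,x_0)$ realizing a strict local extremum of $u_\e-\varphi$, I would compute $\lim_{\e\to 0} K_\e[u_\e](t_\e,x_\e)$ by splitting the integral at $|h|=\delta$. Pointwise in $h$, the scaling satisfies $\psi_\e^{-1}(h)/\e\to \sign(h)f(h)/f'(0)$ as $\e\to 0$ (using $f^{-1}(s)\sim s/f'(0)$ at the origin from \Cref{hyp:Jmut}), so that the integrand converges pointwise to $e^{\sign(h)f(h)\partial_x \varphi(t_0,x_0)/f'(0)}J(h)$. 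On $|h|<\delta$, the viscosity test inequality replaces the $u_\e$ increment by the $\varphi$ increment with the appropriate direction, and smoothness of $\varphi$ yields the limit. For $|h|\ge \delta$ the key tool is the Lipschitz-type estimate~\cref{ineq:estim_ueph}: since the rescaling is designed so that $f(|\psi_\e^{-1}(h)|)=\e f(|h|)$, the estimate gives $-Af(|\psi_\e^{-1}(h)|)/\e=-Af(|h|)$, and the integrand is uniformly dominated by $e^{(A-1)f(|h|)}=J(|h|)^{1-A}$, which lies in $L^1(\R)$ precisely because $A\in(0,1-1/\mu)$ combined with the polynomial tail bound on $f$ from~\cref{hyp:J_poly}.

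\textbf{Sub/super-solution tests.} For the subsolution property, at any $(t_0,x_0)$ with $\overline{u}(t_0,x_0)>0$ the upper semi-continuity of $\overline{u}$ forces $u_\e\ge \overline{u}(t_0,x_0)/2$ near $(t_\e,x_\e)$ for small $\e$, so that $n_\e=e^{-u_\e/\e}\to 0$; the local lower bound on the integrand (from the max test) combined with non-negativity of the tail, and monotone convergence in $\delta$, yields $\partial_t \varphi+\int[e^{\sign(h)f(h)\partial_x\varphi/f'(0)}-1]J\,\dd h+1\le 0$. Points where $\overline{u}(t_0,x_0)\le 0$ are trivial, since the $\min$ in~\eqref{eq:ulim} is then automatically non-positive. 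For the supersolution property, the local upper bound on the integrand combined with the integrable dominant $J^{1-A}$ enables dominated convergence; dropping the non-negative $n_\e$ from the right-hand side of the equation yields the opposite inequality at \emph{every} point, including on the free boundary $\{u=0\}$. The constraint $u\ge 0$ is inherited directly from $u_\e\ge 0$.

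\textbf{Comparison and main obstacle.} For (ii), the limit Hamiltonian $H(p)=\int (e^{\sign(h)f(h)p/f'(0)}-1)J(h)\,\dd h$ is convex, finite and continuous on $(-f'(0),f'(0))$, and independent of $(t,x)$; the a priori Lipschitz bound~\cref{ineq:graduep} transfers to $\overline{u}$ and $\underline{u}$ and keeps both inside the finiteness domain of $H$. A standard doubling-of-variables argument then provides comparison for the obstacle problem~\eqref{eq:ulim}, giving $\overline{u}\le \underline{u}$ and hence the convergence. The step I expect to be the main obstacle is the tail estimate in the supersolution direction, whose quantitative threshold $A<1-1/\mu$ is exactly dictated by the need to integrate $J^{1-A}$ against the sharp polynomial decay allowed by~\cref{hyp:J_poly}; a secondary delicate point is the supersolution test on the free boundary, which is resolved by the sign of $n_\e$.
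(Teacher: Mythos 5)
Your proposal follows essentially the same route as the paper: split the nonlocal integral into a compact region $|h|\le M$ (where smoothness of the test function and the pointwise limit $\psi_\e(h)/\e\to \sign(h)f(h)/f'(0)$ give the limit) and a tail $|h|>M$ (dominated by $J^{1-A}\in L^1$ via the finite-difference estimate of Lemma~\ref{lem:estim_uep}), take $\e\to 0$ then $M\to\infty$, drop $n_\e\ge 0$ for the supersolution, split into $\overline u>0$ and $\overline u=0$ for the subsolution, then conclude by comparison. All of this matches the paper's Lemma~\ref{lem:half_relaxed_limits} and the proof of (ii). Two points deserve mention.

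First, the initial condition. You write that the trace $u(0,\cdot)=u^0$ is ``inherited from the assumed locally uniform convergence $u_\e^0\to u^0$,'' but this is not automatic for half-relaxed limits: $\underline u(0,\cdot)$ and $\overline u(0,\cdot)$ are defined via a $\liminf$ (resp.\ $\limsup$) over $(s,y)\to(0,x)$ with $s>0$ as well, so a boundary layer at $t=0$ could in principle destroy the equality. The paper devotes a separate argument to this: it proves that $\underline u$ (resp.\ $\overline u$) satisfies the viscosity initial condition~\eqref{eq:inicondsuper}, i.e.\ at $t=0$ either the PDE inequality holds or $\underline u(0,x_0)\ge u^0(x_0)$ (resp.\ $\le$). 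The proof goes by contradiction: assuming $\underline u(0,x_0)<u^0(x_0)$, one extracts a subsequence of touching points with $t_{\e_k}>0$ (the case $t_\e=0$ for all small $\e$ being ruled out by passing to the half-relaxed limit in the test inequality at $t=0$), and then runs the interior argument. This step is needed before the comparison principle can be invoked and should not be skipped.

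Second, a small inaccuracy: the Hamiltonian $H(p)=\int(e^{\sign(h)f(h)p/f'(0)}-1)J\,\dd h$ is not finite on all of $(-f'(0),f'(0))$. Since $J(h)\lesssim|h|^{-\mu}$ and $e^{f(h)p/f'(0)}\sim |h|^{\mu p/f'(0)}$ at infinity, integrability requires $|p|<f'(0)(1-1/\mu)$. This does not damage your argument, because the Lipschitz bound~\eqref{ineq:graduep} gives $|\partial_x u|\le Af'(0)$ with $A<1-1/\mu$ by hypothesis~\eqref{cond:uep0_mutation}, which keeps all relevant slopes strictly inside the domain of finiteness; but the claimed interval is wrong as stated. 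Finally, the ``delicate free-boundary point'' you flag is on the \emph{subsolution} side (where the $n_\e$ term must be controlled and vanishes precisely because $\overline u>0$ there, while the case $\overline u=0$ makes the obstacle trivial) rather than the supersolution side, where one simply drops $n_\e\ge 0$ at every point.
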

We first note that the integral term in~\cref{eq:ulim} is well defined due to the inequality~\cref{ineq:graduep}.  It is also related, up to a change of variables, to the analogous equation obtained by M\'el\'eard and Mirrahimi~\cite[equation~(27)]{SMSM}.  The proof of~\Cref{theo:mut}, appearing in~\Cref{sec:proof_small_mutation}, uses the half--relaxed limits method introduced by Barles and Perthame \cite{barles1990comparison}. It relies heavily upon~\Cref{lem:estim_uep}, which is proved last in \Cref{sec:a_priori}. Under the small mutation regime, we do not use explicit sub- and super--solutions. In particular, the sub- and super--solutions introduced in the propagation regime are not relevant in this situation.

The previous result \Cref{theo:mut} on the behavior of $\uep=-\e\ln(\nep)$ as $\epsilon\to0$ allows us to study the convergence of $n_\e$ as $\epsilon\to0$. Heuristically, when $\epsilon$ is small, we expect that $n_\epsilon \simeq \exp(u(t,x)/\epsilon)$. Thus, the solution $u$ gives an indication on where the solution $n_\epsilon$ is concentrated in the regime of small mutations, or at least at a first order of approximation.
More precisely we obtain the following result:
\begin{theorem}\label{prop:convnullset}
Let $f$ satisfy \Cref{hyp:Jmut} and let $\nep$ be the solution of~\cref{eq:nep_mut} starting with initial condition $n^0_\epsilon$ such that $u^0_\epsilon = -\epsilon\ln(n^0_\epsilon)$ satisfies~\cref{cond:uep0_mutation}.
Then as $\e \to 0$,
\begin{equation}\label{prop:convnep}
\begin{cases}
\nep \to 0 \textit{ locally uniformly in } \mathcal{A}=\{(t,x) \in (0,\infty)  \times \R \ |\ u(t,x)>0\},\medskip\\
\nep \to 1 \textit{ locally uniformly in } \mathcal{B}=\text{Int}\{(t,x) \in (0,\infty)  \times \R \ |\ u(t,x)=0\}.	
\end{cases}
\end{equation}
\end{theorem}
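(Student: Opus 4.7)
The plan combines the Hopf--Cole identity $n_\epsilon = \exp(-u_\epsilon/\epsilon)$ with the local uniform convergence $u_\epsilon \to u$ provided by \cref{theo:mut}. Part (a) is immediate: on any compact $K \subset \mathcal{A}$, continuity of the limit $u$ gives $\delta_K := \min_K u > 0$, so for $\epsilon$ small the uniform convergence yields $u_\epsilon \geq \delta_K/2$ on $K$, and hence $n_\epsilon \leq \exp(-\delta_K/(2\epsilon)) \to 0$ uniformly on $K$.

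Part (b) is more subtle, because $u_\epsilon \to 0$ locally uniformly on $\mathcal{B}$ does not by itself force $n_\epsilon \to 1$: one needs $u_\epsilon = o(\epsilon)$ pointwise, whereas $u_\epsilon = O(\sqrt{\epsilon})$ would a priori be compatible and yet give $n_\epsilon \to 0$. The strategy is to argue by contradiction: assume there is $(t_0, x_0) \in \mathcal{B}$, $\eta > 0$, and a subsequence $\epsilon_k \to 0$ along which $n_{\epsilon_k}(t_0, x_0) \leq 1 - \eta$, equivalently $u_{\epsilon_k}(t_0, x_0) \geq c\epsilon_k$ with $c = -\ln(1-\eta) > 0$. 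Choose $r > 0$ so that $\overline{B_r(t_0, x_0)} \subset \mathcal{B}$; then $u_\epsilon \to 0$ uniformly on this closed neighborhood. The goal is to build a subsolution $\underline n_\epsilon$ of~\eqref{eq:nep_mut} on $[s, t_0] \times \R$, for some intermediate time $s \in (t_0 - r, t_0)$, such that $\underline n_\epsilon(s, \cdot) \leq n_\epsilon(s, \cdot)$ globally and $\underline n_\epsilon(t_0, x_0) \to 1$, whereupon the comparison principle contradicts the assumption.

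A natural ansatz is $\underline n_\epsilon(t, x) := g_\epsilon(t)\, \exp\!\bigl(-A\, f(|x-x_0|)/\epsilon\bigr)$, where $A < 1 - 1/\mu$ is the Lipschitz exponent from~\eqref{cond:uep0_mutation} and $g_\epsilon$ solves a logistic-type ODE $\epsilon g_\epsilon' = g_\epsilon(c_\epsilon - g_\epsilon)$ with $c_\epsilon \to 1$. The spatial profile is dictated by \cref{lem:estim_uep}: combined with $u_\epsilon(s, x_0) \leq \alpha_\epsilon \to 0$, the Lipschitz bound~\eqref{ineq:estim_ueph} gives the global estimate $n_\epsilon(s, x) \geq \exp\!\bigl(-(\alpha_\epsilon + A f(|x - x_0|))/\epsilon\bigr)$, ensuring the required initial comparison when $g_\epsilon(s) = \exp(-\alpha_\epsilon/\epsilon)$. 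The logistic ODE then drives $g_\epsilon(t_0)$ to $1$ because $(t_0 - s)/\epsilon \to \infty$, yielding the contradiction. The key quantitative input making this work is the integrability of $\int \exp((A-1) f(h))\,dh$, which holds thanks to $A < 1 - 1/\mu$ together with~\eqref{hyp:J_poly}.

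The main obstacle is verifying the subsolution inequality $\epsilon \partial_t \underline n_\epsilon \leq L_\epsilon \underline n_\epsilon + \underline n_\epsilon(1-\underline n_\epsilon)$, which requires a careful estimate of $L_\epsilon \underline n_\epsilon$. Because $|\psi_\epsilon^{-1}(h)|$ diverges for fixed $h > 0$ as $\epsilon \to 0$, the nonlocal operator explores all of $\R$; naively localized profiles tend to be eaten by the nonlocal dissipation, since $L_\epsilon \underline n_\epsilon$ then behaves like $-\underline n_\epsilon$, not a positive multiple of it. The refined choice of profile (possibly replacing $\exp(-Af/\epsilon)$ by a smoothed or ``thickened'' version) and the precise value of $c_\epsilon$ must be tuned so that the reaction term $\underline n_\epsilon(1-\underline n_\epsilon)$ dominates the nonlocal loss on time scales of order $\epsilon$, which is exactly where the integrability $\int \exp((A-1)f(h))\,dh < \infty$ enters to split and control the integral defining $L_\epsilon \underline n_\epsilon$ into a small-$h$ part and a large-$h$ tail contribution.
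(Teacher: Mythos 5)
Part (a) is correct and identical to the paper's argument, so no comment is needed there.

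Part (b) takes a genuinely different route from the paper, but in its current form it has a gap that you yourself flag and do not close. You propose the ansatz $\underline n_\epsilon(t,x) = g_\epsilon(t)\exp(-Af(|x-x_0|)/\epsilon)$ and then a logistic ODE for $g_\epsilon$, but the whole scheme hinges on verifying the subsolution inequality, and this is where the argument breaks down. As you correctly observe, subadditivity of $f$ only gives
\[
\frac{L_\epsilon\underline n_\epsilon}{\underline n_\epsilon}(t,x) = \int_\R\Bigl(e^{-\frac{A}{\e}\bigl(f(|x-x_0-\psi_\e^{-1}(h)|)-f(|x-x_0|)\bigr)}-1\Bigr)J(h)\,dh \;\geq\; -1,
\]
with essentially no improvement, because $|\psi_\e^{-1}(h)| = f^{-1}(f(|h|)/\e)\to\infty$ for any fixed $h\neq 0$. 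With the nonlocal loss of order $-\underline n_\epsilon$, the subsolution inequality forces $\e g_\e' \leq -g_\e^2 e^{-Af/\e}$, i.e.\ $g_\e$ is nonincreasing and cannot climb from $e^{-\alpha_\e/\e}$ to $1$. You suggest replacing the profile by a ``smoothed or thickened'' version, but the required refinement would have to make the nonlocal loss $o(1)$ uniformly while still matching the initial global lower bound $n_\e(s,\cdot) \geq e^{-(\alpha_\e + Af(|\cdot-x_0|))/\e}$, and it is not at all clear such a profile exists: the scaling of the kernel is precisely chosen so that it disperses to infinity at all scales, which is why the naive loss estimate is essentially tight. The integrability $\int e^{(A-1)f}\,dh < \infty$ that you invoke controls the tails (it is the constant $r$ of \Cref{lem:estim_uep} and appears again in the tail estimate $I_2^{(M,\e)}$ of \Cref{lem:half_relaxed_limits}), but it does not by itself rescue the near-field part of the subsolution inequality.

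The paper avoids this obstacle entirely. Rather than constructing an explicit subsolution of \cref{eq:nep_mut} and applying comparison for $n_\e$, it works at the level of $u_\e$: it builds the quadratic test function $\phi(t,x) = \tfrac{2Af'(0)}{r}|x-x_0|^2 + |t-t_0|^2$, which touches $u$ from above at $(t_0,x_0)$ \emph{with zero spatial gradient there}. At the nearby maxima $(t_\e,x_\e)$ of $u_\e-\phi$, the equation gives $n_\e(t_\e,x_\e) - 1 = 2(t_\e-t_0) - \bigl(1 - \int e^{-\frac1\e(u_\e(\cdots)-u_\e(t_\e,x_\e))}J\,dh\bigr)$, the max property replaces the $u_\e$-increments by $\phi$-increments, and since $\partial_x\phi(t_0,x_0)=0$ the integral converges to $\int J = 1$, so the nonlocal term vanishes in the limit. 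This produces $\liminf n_\e(t_\e,x_\e)\geq 1$ without ever needing a global subsolution of the reaction--diffusion equation, and the transfer from $(t_\e,x_\e)$ to $(t_0,x_0)$ uses the same max-property inequality. In short: your contradiction/comparison approach is plausible in spirit but would require a substantially new idea for the subsolution profile, whereas the viscosity test-function approach of the paper is designed exactly so that the problematic nonlocal term is killed in the limit by the gradient bound of \Cref{lem:estim_uep} and a well-chosen touching function.
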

We now discuss, heuristically, \Cref{prop:convnullset}.  When $\epsilon$ is small, we expect that $n_\epsilon \sim 1$ in $\mathcal{B}$, implying that, at time $t$, the phenotype $x$ is realized in the population if $(t,x) \in \mathcal{B}$.  On the other hand, $n_\epsilon \sim 0$ in $\mathcal{A}$.  Similarly, the trait $x$ at time $t$ will not be realized in the population if $(t,x) \in \mathcal{A}$.  Hence, if the mutations are sufficiently small, i.e.~if $\epsilon$ is small enough, the sets $\mathcal{A}$ and $\mathcal{B}$ determine which phenotypes are realized in the population. We can thus deduce the form of the solution $n_\epsilon$ when $\epsilon$ is small. This final result is proved in \Cref{sec:conv_nulset_mut}.

To illustrate \Cref{theo:mut}, we discuss the following example which is also discussed in the paper of Mirrahimi and M\'el\'eard \cite{SMSM} in the case of the fractional Laplacian. We show how our results \Cref{theo:mut} and \Cref{prop:convnullset} gives an approximation of the behaviour of the solution $n_\epsilon$ of the problem~\eqref{eq:nep_mut}.

\begin{example}
Let $\nep$ be the solution of~\eqref{eq:nep_mut} starting with $\nep^0(x)=J(x)^{A/\epsilon},$ then the initial condition $u^0_\epsilon(x) = -\epsilon\ln(\nep^0(x)) = Af(x)/\epsilon$.  Notice that $u^0_\e$ satisfies~\cref{cond:uep0_mutation} because $f$ is concave. It follows from the \Cref{theo:mut} that $u_\epsilon$ converge locally uniformly to the unique viscosity solution of \begin{equation*}
\begin{cases}
\min{\left( \partial_t u  + H(\partial_x u)  , u \right)}=0, \ \hbox{ on } (0,+\infty) \times \R, \medskip\\
u(0,\cdot) = Af.
\end{cases}
\end{equation*}
where the Hamiltonian $H$ is defined by 
\begin{equation*}
H(p) =   \int_\R \left[e^{\frac{\sign(h) f(h)}{f'(0)}p} -1 \right] J(h)  \, \dd h  + 1.
\end{equation*}
From a Taylor expansion, one can check that there exists two positive constants $\overline{\kappa}$ and $\underline{\kappa}$ such that
\begin{equation*}
\begin{split}
	 1+\underline{\kappa} p^2 &= 1+ p^2 \int_0^\infty{ \left(\frac{f(h)}{f'(0)}\right)^2 e^{-\frac{f(h)}{f'(0)}A} J(h)  \, \dd h}\\
 &\leq H(p)
 \leq 1+p^2 \int_0^\infty{ \left(\frac{f(h)}{f'(0)}\right)^2e^{\frac{f(h)}{f'(0)}A} J(h)  \, \dd h} = 1+\overline{\kappa}p^2.
 \end{split}
\end{equation*}
Using these inequalities, we obtain the following estimates of $u$:
\begin{equation*}
  \max \!{\left(\! \inf_{y\in\R}\!{\!  \left( \! Af(y)\! +\! \frac{|x-y|^2}{4\underline{ \kappa}t} -t \right) }, 0\right)} 
   \leq u(t,x) 
   \leq \max \! {\left( \inf_{y\in\R}{ \left( \!Af(y) \! + \! \frac{|x-y|^2}{4\overline{\kappa}t} \! - \! t \right) }, 0\right)}
\end{equation*}
We then deduce that
\begin{equation*}
\begin{split}
   \left\{ (t,x)\in(0,+\infty)\times\R: \ |x|\leq \max_{r\in[0,1]}{\Big(2\sqrt{\underline\kappa}rt + f^{-1}(t(1-r^2)/A)  \Big)} \right\} 
   \subset \{u=0\} \\
   \subset \left\{ (t,x)\in(0,+\infty)\times\R: \ |x|\leq \max_{r\in[0,1]}{\Big(2\sqrt{\overline \kappa}rt + f^{-1}(t(1-r^2)/A)  \Big)} \right\}
\end{split}
\end{equation*}
Combining these estimates with~\Cref{prop:convnullset}, we conclude that the population propagates in the phenotype space to be of  order $f^{-1}(t/A)$ for large time. Roughly, when the mutations in~\cref{eq:nep_mut} are sufficiently small and time is sufficiently large, we see that the phenotypes $x$ realized in the population are those for which $x \lesssim f^{-1}(t/A)$ and those that have not been realized are those for which $x \gtrsim f^{-1}(t/A)$. 
Moreover, we can deduce the following approximation at the first order of $\epsilon$. Formally, using the convergence \Cref{theo:mut}, we can say that the solutions $n_\epsilon=\exp(-u_\epsilon/\epsilon)$ of~\cref{eq:nep_mut} can be approximated by $\exp(-u/\epsilon)$. 
Roughly, we conclude that $n_\e(t,x) \simeq 1$ when $|x| \lesssim f^{-1}(t/A)$ and that $n_\e(t,x) \simeq 0$ when $|x| \gtrsim f^{-1}(t/A)$, up to some additional small error depending on $\e$.
\end{example}

\begin{remark}
Throughout the paper, we use $C$ to refer to any constant depending only on the kernel $J$.  This constant may change line-by-line.
\end{remark}
%

\section{The propagation result: proof of Theorem~\ref{thm:u_conv}}$ $\\

To prove Theorem~\ref{thm:u_conv}, we construct sharp explicit sub- and super--solutions to the non-rescaled problem~\eqref{eq:main}. Our sub- and super--solutions are sharp enough to converge after rescaling to the same solution. This construction is defined in the next proposition.

\begin{proposition}\label[proposition]{prop:subsuper}
Let the kernel $J$ satisfy ~\cref{hyp:J} and $n$ be a solution of~\cref{eq:main} with initial data $n^0$ satisfying~\cref{cond:n0_likeJ}. Then, there exists a bounded positive function $\theta,$ which only depends on $J,$ such that $\theta(s) \to 0$ as $s\to+\infty$, and positive constants $\underline C < 1  < \overline C$, such that, for all $t>0$ and $x\in\R,$
\begin{equation}\label{eq:subsuper}
\ds \dfrac{\underline{C}\exp\left( -\int_0^{t}\theta(s) ds \right)}{1 + e^{-t}/J(x)} \leq n(t,x) \leq \dfrac{2\overline{C}\exp\left( \int_0^{t}\theta(s) ds \right)}{1 + e^{-t}/J(x)}.
\end{equation}
\end{proposition}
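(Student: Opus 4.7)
The idea is to build sharp explicit sub- and super-solutions of~\cref{eq:main} from the logistic ODE $m'=m(1-m)$ with spatially frozen initial datum $m(0,x)=J(x)$. Setting
\[
w(t,x) := 1 + \frac{e^{-t}}{J(x)}, \qquad n^\star(t,x) := \frac{1}{w(t,x)} = \frac{J(x)}{J(x)+e^{-t}},
\]
we look for candidates $n_\pm(t,x) := C_\pm(t)\,n^\star(t,x)$ with prefactors $C_\pm(t)$ to be determined. Since $w_t = -(w-1)$, a direct computation gives
\[
\partial_t n_\pm - n_\pm(1-n_\pm) \;=\; \frac{C_\pm'(t)}{w} + \frac{C_\pm(t)[C_\pm(t)-1]}{w^2}, \qquad J*n_\pm - n_\pm \;=\; C_\pm(t)\,(J*n^\star - n^\star).
\]
Hence $n_+$ is a supersolution (resp.\ $n_-$ a subsolution) of~\cref{eq:main} whenever the left-hand side is $\ge$ (resp.\ $\le$) the corresponding convolution remainder on the right.

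The heart of the proof, and where I expect the main technical obstacle, is the following pointwise estimate: there exists a bounded nonnegative $\theta:(0,\infty)\to[0,\infty)$ with $\theta(t)\to 0$ as $t\to\infty$, depending only on $J$, such that, uniformly in $x\in\R$,
\[
\bigl|(J*n^\star - n^\star)(t,x)\bigr| \;\le\; \theta(t)\,n^\star(t,x).
\]
Starting from the closed-form identity
\[
(J*n^\star - n^\star)(t,x) \;=\; e^{-t}\!\int_\R \frac{J(y)\,[J(x-y) - J(x)]}{[J(x-y)+e^{-t}][J(x)+e^{-t}]}\,dy,
\]
one splits the domain into $|y|\le R(t)$ and $|y|>R(t)$ for a well-chosen cutoff $R(t)$. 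On the near region, the lower-tail assumption~\cref{hyp:J_slowexp} provides $|J'|=o(J)$ at infinity, so that $J(x-y)/J(x)\to 1$ uniformly for $|y|\le R(t)$ as $|x|\to\infty$, giving smallness of the numerator $J(x-y)-J(x)$ relative to the denominators. On the far region, the polynomial upper bound~\cref{hyp:J_poly} ensures $\int_{|y|>R}J(y)\,dy$ is small, while the remaining ratio $J(x-y)/[J(x-y)+e^{-t}]\le 1$ prevents blow-up. Balancing the two contributions (using the controlled growth of $f^{-1}$ from \cref{hyp:J}) produces the required $\theta$.

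With this estimate in hand, the super/subsolution condition reduces, in the worst case $w\to\infty$, to the scalar ODE inequalities
\[
C_+'(t) \ge C_+(t)\,\theta(t), \qquad C_-'(t) \le -C_-(t)\,\theta(t),
\]
since the remainder $C_\pm(C_\pm-1)/w^2$ has the favorable sign whenever $C_+>1$ and $C_-<1$. The choices
\[
C_+(t) := 2\overline{C}\,\exp\!\Bigl(\int_0^t\theta(s)\,ds\Bigr), \qquad C_-(t) := \underline{C}\,\exp\!\Bigl(-\!\int_0^t\theta(s)\,ds\Bigr)
\]
solve these. Checking the initial comparison $n_\pm(0,x) = C_\pm(0)J(x)/(1+J(x))$ against~\cref{cond:n0_likeJ}: the subsolution bound follows at once from $J/(1+J)\le J$, while $n_+(0,x)\ge n^0(x)$ requires $C_+(0)\ge \overline{C}(1+J(x))$ for every $x$; since $J\le J(0)=1$, the sharp choice is $C_+(0)=2\overline{C}$, which accounts for the factor $2$ appearing in~\cref{eq:subsuper}. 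A standard comparison principle for the monostable integro-differential equation~\cref{eq:main} --- whose Lipschitz, order-preserving semigroup is classical --- then yields $n_-(t,x)\le n(t,x)\le n_+(t,x)$ for all $(t,x)\in(0,\infty)\times\R$, which is exactly~\cref{eq:subsuper}.
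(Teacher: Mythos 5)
Your overall strategy coincides with the paper's: the sub- and super-solutions are exactly $C_\pm(t)\phi(t,x)$ with $\phi(t,x)=J(x)/(J(x)+e^{-t})$, the ODE computations for $C_\pm$ (including the correct cancellation that yields $C_\pm'/w + C_\pm(C_\pm-1)/w^2$, the sign of the quadratic remainder when $C_+>1$, $C_-<1$, and the factor $2$ coming from $J/(1+J)\ge J/2$) are right, and the entire burden is reduced, as in the paper, to the pointwise estimate
\begin{equation*}
\bigl|(J*\phi-\phi)(t,x)\bigr|\le\theta(t)\,\phi(t,x)\quad\text{uniformly in }x,\qquad \theta(t)\to 0.
\end{equation*}

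However, you have not actually proved this estimate, and your sketch of the far-region contribution is incorrect as stated. Bounding $J(x-y)/[J(x-y)+e^{-t}]\le 1$ only gives $|J*\phi-\phi|\lesssim 1$, which is useless when $\phi(t,x)$ is small (i.e.\ when $|x|\gg f^{-1}(t)$). What one really needs to control, after dividing by $\phi(t,x)$, is the quantity $J(x-y)J(y)/J(x)$ uniformly in $x$, for $y$ ranging over the \emph{intermediate} scales $R(t)\lesssim |y|\lesssim |x|$. This does \emph{not} follow from $J\in L^1$ alone: for sub-exponential kernels the ratio $J(x-y)/J(x)$ is unbounded in $x$, and showing $J(x-y)J(y)/J(x)$ is dominated by an integrable function requires the sub-linearity hypothesis $\limsup_{x\to\infty}xf'(x)/f(x)<1$ in an essential way. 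This is precisely where the paper invests most of its effort: the far integral is split into four pieces $III_1,\dots,III_4$ according to whether $y$ lies near $-x$, near $0$, near the cutoff $\gamma(t)$, or in the middle range $(\gamma(t),x-\gamma(t))$, and the middle term alone requires separating the cases $xf'(x)$ bounded versus unbounded. The choice of cutoff $\gamma(t)=\min\{f'(f^{-1}(t)/2)^{-1},\theta_1(t)^{-1}\}$ is also delicate; it must simultaneously make the near-region Taylor estimate work and keep $J(x-\gamma(t))/J(x)$ bounded. Your near-region argument has a related gap: ``$J(x-y)/J(x)\to 1$ uniformly for $|y|\le R(t)$ as $|x|\to\infty$'' is not uniform in $x$ without the right choice of $R(t)$, and in any case you need an estimate for \emph{all} $x$, not only large $|x|$. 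In short: your framework is the paper's framework, and your reduction is correct, but the technical core --- the pointwise bound on $(J*\phi-\phi)/\phi$ --- is left essentially unproved, and the one concrete bound you do invoke in the far region does not close the argument.
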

The function $\theta$ could, for a specific kernel, be computed explicitly; however, such a computation would be quite involved. For our purposes, we only need to know that it converges to $0$ as $t\to+\infty.$ 
Next let us give some interpretation of this result and an insight into the underlying ideas of the proofs. Let us first rewrite the estimate as 
\[
  \underline{C}\exp\left( -\int_0^{t}\theta(s)\,ds\right) \phi(t,x) \leq n(t,x) \leq 2\overline{C}\exp\left( \int_0^{t}\theta(s) ds \right)\phi(t,x).
\]
where we define the function $\phi(t,x):= (1+e^{-t}/J(x))^{-1}.$ We observe that the behaviour of $n$ is well approximated by the solution of the family of decoupled ODEs:
\begin{equation*}
	\begin{cases}
              \displaystyle\frac{d\phi(t,x)}{dt}  = \phi(1-\phi), \\
              \phi(0,\cdot)  = \frac{J}{1+J} \leq J,
	\end{cases}
\end{equation*}
parametrized by $x\in\R.$ In other words, the behaviour of $n$ at large time is dominated by the reaction term; that is to say that the dispersion term plays a negligible role, in some sense, compared to the growth by reaction.

Before embarking on the proof of this proposition, we explain how \Cref{prop:subsuper} implies \Cref{thm:u_conv}.
\subsection{Proof of \cref{thm:u_conv} assuming \cref{prop:subsuper}}
\begin{proof}[{\bf Proof of \Cref{thm:u_conv}}]
Let us assume that the estimate~\cref{eq:subsuper} holds true for a solution $n$ of~\cref{eq:main} with initial data $n^0$ satisfying~\cref{cond:n0_likeJ}. Then, for any $\epsilon>0,$ the rescaled solution $\nep(t,x) = n(t/\epsilon,\psi_\epsilon(x))$ with $\psi_\epsilon$ defined in~\cref{eq:rescaling_psi}, satisfies for all $t>0$ and $x\in\R$:
\begin{equation*}
\frac{\underline{C}\exp\left( -\int_0^{\frac{t}{\eps}}\theta(s) ds \right)}{1 + e^{-t/\e}/J(x)^\frac{1}{\eps}} \leq n_\eps(t,x) \leq \frac{2\overline{C}\exp\left( \int_0^{\frac{t}{\eps}}\theta(s) ds \right)}{1 + e^{-t/\e}/J(x)^\frac{1}{\eps}},
\end{equation*}
Thus, its Hopf--Cole transformation $u_\epsilon(t,x) = -\epsilon\ln(\nep(t,x))$ satisfies for all $t>0$ and $x\in\R$
\begin{equation}\label{chris1}
\begin{split}
- \eps \ln(\underline{C}) + &\eps \int_0^{\frac{t}{\eps}} \theta (s) ds + \eps \ln\left( 1 + \left( e^{-t}/J(x)\right)^\frac{1}{\eps} \right)   \geq  \ u_\eps(t,x) \\
	&  \geq - \eps \ln(2\overline{C}) - \eps \int_0^{\frac{t}{\eps}} \theta (s) ds + \eps \ln\left( 1 + \left( e^{-t}/J(x)\right)^\frac{1}{\eps} \right).
\end{split}
\end{equation}
We point out that $t^{-1} \int_0^t \theta(s) ds \to 0$ as $t\to\infty$ since $\theta(t) \to 0$ as $t\to +\infty$.
It follows that, locally uniformly in $t$,
\begin{equation}\label{chris2}
\lim_{\eps \to 0} \eps \int_0^{\frac{t}{\eps}} \theta (s) ds = 0.
\end{equation}
In addition, it is easy to see that, locally uniformly in $x$ and $t$, 
\begin{equation}\label{chris3}
\lim_{\e \to 0} \eps \ln\left( 1 + \left( e^{-t}/J(x)\right)^\frac{1}{\eps} \right) = \max\left( f(\vert x \vert) - t, 0 \right).
\end{equation}
Hence, using~\cref{chris1,chris2,chris3}, we see that, locally uniformly in $x$ and $t$,
\[
	\lim_{\e\to0} u_\e(t,x) = \max\{f(x) - t, 0\},
\]
which concludes the proof of ~\cref{thm:u_conv}.
\end{proof}
\begin{remark}
Using the work below, one could, in practice, compute $\theta$ and determine for which kernels $J$ the function $\theta$ is integrable.  When this is the case, the estimate given by \cref{eq:subsuper} is more precise since $\int_0^t \theta(s) ds$ could be replaced by a constant on both sides of the equation. One could then quantify and compare more precisely the expansion of the $\lambda$-level lines of the solution $n$ for various values of $\lambda$.  Further, by plotting the function $x \mapsto \left( 1 + e^{-t}/J(\vert x \vert) \right) n(t,x)$ for various values of time (results not shown), one can investigate the accuracy of the upper and lower bounds given by \cref{eq:subsuper}. The threshold for integrability of $\theta$ appears to be kernels like $\exp\{\sqrt{\vert x \vert}\}$: those which are fatter yield an integrable $\theta$.  In this case,~\cref{eq:subsuper} gives a sharp estimate, up to the constants, of $n$ and, in turn, on the expansion of the level sets of $n$.  On the other hand, when kernels are thinner, $\theta$ is not integrable and~\cref{eq:subsuper} is no longer an accurate point-wise bound, though it is good enough for our purposes.  This is consistent with the fact that when the kernel is thin-tailed the qualitative behavior of $n$ is quite different.
\end{remark}

\subsection{\!\!The existence of sub- and super-solutions: \!Proof of \Cref{prop:subsuper}}
\begin{proof}
We will show that the left hand side and the right hand side of~\cref{eq:subsuper} are respectively a sub- and a super--solution of~\cref{eq:main}. As already observed these sub- and super--solution are constructed from the family of solutions of ODEs of the form
\begin{equation}\label{eq:defphi}
\phi(t,x) = \frac{1}{1 + e^{-t}/J(x)}. 
\end{equation}
Then, we may write our (potential) sub- and super-solutions as
\begin{equation*}
\underline{\phi}(t,x) = \underline{C} \phi(t,x) \exp\left(- \int_0^t  \theta (s) ds \right) \quad \hbox{ and } \quad \overline\phi(t,x) = 2 \ \overline{C}\phi(t,x) \exp\left( \int_0^t  \theta (s) ds \right),
\end{equation*}
for all $t>0$ and $x \in \R$.  Note that, 
for all $x\in\R$ 
\[
   \underline{\phi}(0,x) = \frac{\underline{C}J(x)}{1+J(x)} \leq \underline{C}J(x) \leq n^0(x) \leq \overline{C}J(x) \leq \frac{2 \ \overline{C}J(x)}{1+J(x)} = \overline\phi(0,x).
\]
Moreover, a direct computation shows that the function $\overline\phi$ satisfies, for all $t>0$ and $x\in\R$,
\begin{equation*}
\partial_t\overline\phi(t,x) = \overline\phi(t,x) \left( 1 - \phi(t,x) \right) + \theta(t) \overline\phi(t,x) \geq \overline\phi(t,x) \left( 1 - \overline\phi(t,x) \right) + \theta(t) \overline\phi(t,x). 
\end{equation*}
Then,
\begin{multline*}
   \partial_t\overline\phi(t,x)  -  \overline\phi(t,x) \left( 1 - \overline\phi(t,x) \right) - \left( J*\overline\phi - \overline\phi\right)(t,x)  \\ \geq  \Big( \theta(t) \phi(t,x) - \left( J*\phi - \phi\right)(t,x) \Big) 2\ \overline{C} \exp\left( \int_0^t  \theta (s) ds \right).
\end{multline*}
Thus, if  $  \theta \phi - (J* \phi - \phi) \geq 0$ in $(0,\infty)\times\R,$ the function $\overline\phi$ is a super--solution to \cref{eq:main}. Similarly, if $- \theta \phi - (J*\phi - \phi) \leq 0$ in $(0,\infty)\times\R,$ the function $\underline{\phi}$ is a sub--solution to \cref{eq:main}. The proof of \cref{prop:subsuper} then reduces to proving that
\begin{equation}\label{chris11}
-\theta \phi \leq J* \phi - \phi \leq \theta \phi.
\end{equation}

In the following sections, we obtain upper and lower bounds on this convolution term, completing the proof of \Cref{prop:subsuper}. To do so, we will split the space into two regions depending on time; the large range region $\mathcal{E}_\ell(t)$ and the short range region $\mathcal{E}_s(t)$ defined, for all $t>0$, by
\begin{equation}\label{eq:Et}
\mathcal{E}_\ell(t)=\left\lbrace x\in\R :  |x|\geq f^{-1}(t)\right\rbrace \qquad \hbox{ and } \qquad \mathcal{E}_s(t)=\R\setminus\mathcal{E}_\ell(t).
\end{equation}
 We immediately notice that both regions are preserved by the scaling $\left(t/\epsilon, \psi_\epsilon(x)\right)$. 
 We shall estimate $J*\phi - \phi$ in both regions $\mathcal{E}_\ell(t)$ and $\mathcal{E}_s(t)$ separately.

\subsection{Establishing \Cref{prop:subsuper}: the proof of the bound~\cref{chris11}}$ $

To estimate the convolution term, regardless the region in which $x$ lies, we split the domain of integration of the convolution term as follows
\begin{align*}
& \left( J*\phi - \phi \right) (t,x) \\ & \ds = \int_{|x-y|\leq\gamma(t)}\!\!\!J(x-y)\left( \phi(t,y) - \phi(t,x) \right)\,dy + \int_{|x-y|\geq\gamma(t)} \!\! J(x-y) \left( \phi(t,y) - \phi(t,x) \right)\,dy ,\\
                                 & \ds = I_1(t,x) + I_2(t,x) ,      
\end{align*}
for a function $\gamma(t)$ to be determined that localizes the integral around $x$. In what follows, we choose $\gamma(t)$ to be a positive and increasing function of time $t$ such that $\gamma(t)\leq f^{-1}(t)$. Note that by symmetry of the problem, we can assume that $x \geq 0$, which we do from now on.

The existence of $\theta$ in~\cref{chris11} is equivalent to showing that, for $k = 1,2$,
\begin{equation}\label{chris12}
	\lim_{t\to\infty} \frac{|I_k(t,x)|}{\phi(t,x)} = 0,
\end{equation}
where the limit holds uniformly in $x$.

\subsubsection{Estimation of the integral $I_1$}
In the region where $y$ is close to $x$, that is $|x-y|\leq\gamma(t)$, we estimate the difference $\phi(t,y) - \phi(t,x)$ by using the Taylor expansion of $\phi$ around location $x$. 

More precisely, for any $t>0,$ $x\in\R$ and $y$ such that $|x-y|\leq\gamma(t),$ there exists $\xi_{t,x,y}\in(0,1)$ such that 
\begin{equation}\label{eq:phi_Taylor}
   \phi(t,y) - \phi(t,x) = (y-x)\partial_x\phi(t,x\xi_{t,x,y}+(1-\xi_{t,x,y})y)\,.
\end{equation}
For notational ease, we omit the subscripts $(t,x,y)$ for $\xi_{t,x,y}$ in the sequel. Plugging this expression into $I_1$ we obtain
\begin{equation*}
I_1   = \int_{|x-y|\leq\gamma(t)} J(x-y) (y-x)\partial_x\phi(t,x\xi+(1-\xi)y) \,dy.
\end{equation*}
To estimate the first derivative of $\phi$, we use the following.
\begin{lemma}\label{lem:d2phi}
   There exists a positive function $\theta_1(t)$, depending only on $J,$ such that, for any $z\in\R$ and $t>0$, 
   \begin{equation*}
      |\partial_x\phi(t,z)| \leq \theta_1(t)\,\phi(t,z),
   \end{equation*}
   and such that $\theta_1(s) \to 0$ as $s\to\infty$.
\end{lemma}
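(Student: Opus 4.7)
The plan is to compute $\partial_x\phi$ explicitly from the definition and then rewrite the ratio $|\partial_x\phi|/\phi$ in a form that makes the decay in $t$ transparent. Starting from $\phi(t,x) = J(x)/(J(x)+e^{-t})$ one finds
\[
\partial_x\phi(t,x) = \frac{J'(x)\,e^{-t}}{(J(x)+e^{-t})^2},
\]
and dividing by $\phi$, together with the identity $|J'(x)|/J(x) = f'(|x|)$ (which follows from $J(x) = e^{-f(|x|)}$), yields the clean formula
\[
\frac{|\partial_x\phi(t,x)|}{\phi(t,x)} \,=\, f'(|x|)\,\bigl(1 - \phi(t,x)\bigr).
\]
The natural candidate is therefore $\theta_1(t) := \sup_{x\in\R} f'(|x|)\bigl(1-\phi(t,x)\bigr)$, and the whole question reduces to showing $\theta_1(t)\to 0$ as $t\to\infty$.

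The next step is the auxiliary fact that $f'(x)\to 0$ as $x\to\infty$. By the asymptotic concavity in \cref{hyp:J_regconcave}, $f'$ is nonincreasing on $[x_{\rm conc},\infty)$, hence admits a limit $L\geq 0$ at infinity. If $L$ were positive, a standard one-line integration would give $f(x)/x\to L$ as well, whence $xf'(x)/f(x)\to 1$, contradicting the strict inequality in \cref{hyp:J_slowexp}. So $L=0$.

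With this in hand, I would split the supremum at a radius $R>0$ to be chosen. On the outer region $|x|\geq R$, the crude bound $1-\phi\leq 1$ gives $f'(|x|)(1-\phi)\leq \sup_{|y|\geq R}f'(|y|)$, which can be made as small as desired by the previous paragraph. On the inner region $|x|\leq R$, the continuity of $f'$ provides a constant $C_R$ with $f'(|x|)\leq C_R$, and the monotonicity of $J$ yields
\[
1-\phi(t,x) \,=\, \frac{e^{-t}}{J(x)+e^{-t}} \,\leq\, \frac{e^{-t}}{J(R)},
\]
so the inner contribution is at most $C_R\,e^{-t}/J(R)$ and decays exponentially in $t$. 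A standard $\varepsilon/2$ argument — first fix $R$ large to handle the outer piece, then take $t$ large to kill the inner piece — shows $\theta_1(t)\to 0$.

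The only substantive step is the proof that $f'(x)\to 0$; everything else is routine bookkeeping. This step is also the only place where the tail hypotheses \cref{hyp:J_regconcave} and \cref{hyp:J_slowexp} enter the argument, consistent with the fact that the lemma is fundamentally a statement about the decay of the tail of $J$.
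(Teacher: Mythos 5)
Your proposal is correct and follows essentially the same strategy as the paper: both compute the exact identity $|\partial_x\phi|/\phi = f'(|x|)(1-\phi)$ and then perform a near/far split in space, using the exponential smallness of $1-\phi$ in the near region and the decay of $f'$ at infinity in the far region. The only cosmetic difference is that the paper splits at the time-dependent radius $f^{-1}(\alpha t)$ and writes $\theta_1$ explicitly as a maximum of two decaying quantities (a form it later reuses in the definition of $\gamma(t)$), whereas you split at a fixed radius $R$ and run an $\varepsilon/2$ argument; this changes nothing for the lemma as stated. One thing you do that the paper does not: you actually justify $f'(x)\to 0$ (eventual monotonicity of $f'$ from \cref{hyp:J_regconcave}, then contradiction with \cref{hyp:J_slowexp} if the limit were positive), whereas the paper merely asserts that the convergence of $\theta_1$ to zero is ``clear from the assumptions.'' Your version is therefore slightly more complete.
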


\begin{proof}
Using the form of $J$
, we can rewrite $\phi(t,z)= (1 + e^{f(z)-t})^{-1}$. A direct computation shows that, for all $t>0$ and $z\in\R$,
\begin{align*}
  |\partial_x \phi(t,z)|  & \ds = \left| \phi(t,z)  f'(z) \frac{e^{f(z)-t}}{1+e^{f(z)-t}}\right|,
\end{align*}
so that 
 \begin{align*}	
    \left\vert \frac{\partial_x\phi}{\phi} \right\vert(t,z)   &\leq  \ds  \frac{e^{f(z)-t}}{1+e^{f(z)-t}}  f'(z)\,. 
 \end{align*}     
To estimate the right hand side of the inequality above, fix $\alpha\in(0,1)$. 
 First,  consider the case when $|z|\leq f^{-1} \left( \alpha t  \right)$.  Then $e^{f(z) -t} \leq e^{-(1-\alpha)t}$, and, hence
\begin{equation*}
    \left\vert \frac{\partial_x\phi}{\phi} \right\vert(t,z)  \leq  \ds  e^{-(1-\alpha)t} \Vert f' \Vert_{\infty}. 
\end{equation*}
On the other hand if $|z|\geq f^{-1}(\alpha t )$, then 
$\frac{e^{f(z)-t}}{1+e^{f(z)-t}}\leq 1$  and, consequently,
\begin{equation*}
    \left\vert \frac{\partial_x\phi}{\phi} \right\vert(t,z)  \leq  \ds  \sup_{|z|\geq f^{-1}(\alpha t)} \vert  f'(z) \vert. 
\end{equation*}
Defining
\begin{equation}\label{eq:theta_1}
	\theta_1(t) = \max\left\{ \|f'\|_\infty e^{-(1-\alpha)t},  \sup_{|z|\geq f^{-1}(\alpha t)}  \vert f'(z) \vert \right\},
\end{equation}
 we conclude that, for all $t>0$ and $z\in\R$ 
\begin{equation*}
    \left\vert \frac{\partial_x\phi}{\phi} \right\vert(t,z)  \leq  \theta_1(t).
\end{equation*}
The convergence of $\theta_1(s)$ to zero as $s$ tends to infinity is clear from the definition and the assumptions on $f$ in \Cref{hyp:J}.
This concludes the proof of Lemma~\ref{lem:d2phi}.
\end{proof}
We deduce from \cref{lem:d2phi}, an estimate on $I_1$
\begin{equation*}
|I_1|   \leq  \theta_1(t) \, \int_{|x-y|\leq\gamma(t)} J(x-y)  \vert y-x \vert\phi(t,x\xi+(1-\xi)y)\,dy.
\end{equation*}
Changing variables, we must estimate
\begin{equation}\label{eq:I1}
\int_{|h|\leq\gamma(t)} J(h) \vert h \vert \phi(t,x+(1-\xi)h)\,dh.
\end{equation}
To do so, we bound $\phi(t,x+(1-\xi)h)$ with $\phi(t,x)$ by choosing the function $\gamma(t)$ carefully.
We first prove that we may choose the positive function $\gamma(t)$ such that, for all $x\in\mathcal{E}_\ell(t)$, $h\in(-\gamma(t),\gamma(t))$, and $\xi\in(0,1)$,
\begin{equation}\label{chris10}
	\phi(t,x+(1-\xi)h)\leq e \phi(t,x).
\end{equation}
Indeed, we have
\begin{equation*}
\begin{array}{ll}
\ln\left( \frac{\phi(t,x+(1-\xi)h)}{\phi(t,x)} \right) 
        & \leq \vert \ln(\phi(t,x+(1-\xi)h)) - \ln(\phi(t,x)) \vert, \\
        & \leq \sup_{z\in[x - (1-\xi)|h|,  x + (1-\xi)|h|]} \left\vert\partial_x (\ln \phi)(t,z)\right\vert \vert h \vert.
\end{array}
\end{equation*}
Using the bound on $\partial_x \left( \ln \phi\right)$ of \Cref{lem:d2phi}, we have
\[
	\ln\left( \frac{\phi(t,x+(1-\xi)h)}{\phi(t,x)} \right)\leq \exp( \theta_1(t) \gamma(t) ).
	\]
Fixing $\gamma$ such that $\theta_1 \gamma \leq 1$ yields~\cref{chris10}. For technical reasons, discussed in the proof of the estimate of $I_2$, we define 
\begin{equation}\displaystyle\label{def:newgamma}
\gamma(t) =
	\min  \left\{f'\left(\frac{f^{-1}(t)}{2}\right)^{-1},\theta_1(t)^{-1} \right\}
\end{equation}
It is clear that $\theta_1 \gamma \leq 1$, as desired.
We point out that, from~\cref{eq:theta_1},
\begin{equation*}
\theta_1(t)^{-1}=\min \left( e^{(1-\alpha)t}, \left[ f'(f^{-1}(\alpha t)) \right]^{-1}  \right).
\end{equation*}
Our choice of $\gamma$ implies
\begin{equation*}
|I_1| \leq e\theta_1(t) \, \left( \int_{|h|\leq\gamma(t)}  J(h) \vert h \vert \,dy \right) \phi(t,x).
\end{equation*}
We now show that 
\begin{equation}\label{eq:theta1m1}
\lim_{t \to + \infty} \theta_1(t) \int_{|h|<\gamma(t)}J(h)|h|dh = 0.
\end{equation}
From our assumption \cref{hyp:J_poly} in \Cref{hyp:J} on $J$, there exists $H>0$ such that if $|h|>H$, then $J(h)\leq |h|^{-\mu}$. We deduce that there exists a constant $C$, depending only on $J$ such that, for all $t>0$,
\begin{equation*}
\begin{split}
	\theta_1(t) \int_{|h|<\gamma(t)}J(h)|h|dh & \ds \leq \theta_1(t)\int_{|h|<H}J(h)|h|dh + \theta_1(t)\int_{H<|h|<\gamma(t)}J(h)|h|dh \\
	&\leq \theta_1(t)C + \theta_1(t)\int_{H<|h|<\gamma(t)}|h|^{1-\mu}dh,
\end{split}
\end{equation*} 
where we interpret the last integral to be zero in the case when $\gamma(t) \leq H$.
Thus we get 
\begin{equation*}                                               
\theta_1(t) \int_{|h|<\gamma(t)}J(h)|h|dh \leq 
\begin{cases}
	C\left(\theta_1(t) + \theta_1(t)|\gamma(t)|^{2-\mu}\right), \qquad& \text{ if } \mu \neq 2,\\[1 mm]
	C\left(\theta_1(t) + \theta_1(t)\ln(\gamma(t))\right),    \qquad& \text{ if } \mu = 2.
\end{cases}
\end{equation*}
Using the definition of $\gamma$,~\cref{def:newgamma}, this inequality becomes
\begin{equation*}                                               
\theta_1(t) \int_{|h|<\gamma(t)}J(h)|h|dh \leq 
\begin{cases}
	C\left(\theta_1(t) + \theta_1(t)^{\mu-1}\right), \qquad& \text{ if } \mu \neq 2,\\[1mm]
  C\theta_1(t)\left(1 + \ln\left(\theta_1(t)^{-1}\right)\right),   \qquad& \text{ if } \mu = 2.
\end{cases}
\end{equation*}
In both cases, we have established that $|I_1|/\phi(t,x) \to 0$ as $t \to \infty$, as claimed.  This establishes~\cref{chris12} for $k = 1$.

\subsubsection{Estimation of the integral $I_2$} The arguments for the upper and lower bounds are different.  As it is simpler, we prove the lower bound first.  Since $\phi$ is positive,
\[
	I_2(t,x) \geq -\int_{|x-y|\geq \gamma(t)} J(x-y) \phi(t,x) dx
		= - \phi(t,x) \int_{|h| \geq \gamma(t)} J(h) dh.
\]
Clearly $\liminf_{t\to\infty} I_2/ \phi(t,x) \geq 0$.

\noindent This finishes the proof of the lower bound for $I_1$ and $I_2$.  To conclude, we need only obtain a matching upper bound  of $I_2$ in order to obtain~\cref{chris12} and thus \Cref{prop:subsuper}.  The proof of this bound is somewhat involved. 
We break up our estimates based on whether $x$ is in the short-range or long-range regime.

\paragraph{\# The long range region:}
We handle first the case when $x \in\mathcal{E}_\ell(t),$ that is $x\geq f^{-1}(t)$. We first split the integral $I_2$ as
\begin{equation}\label{chris13}
\begin{split}
      I_2 &= \int_{\substack{|x-y|\geq\gamma(t)\\ |y|\geq |x|}}\!\!\!\!\! J(x-y)\big( \phi(t,y) - \phi(t,x) \big)\,dy
                + \int_{\substack{|x-y|\geq\gamma(t)\\ |y|\leq |x|}} \!\!\!\! J(x-y)\big( \phi(t,y) - \phi(t,x) \big)\,dy , \\
          & \leq \left( \int_{\substack{|x-y|\geq\gamma(t)\\ |y|\geq |x|}} J(x-y) \frac{\phi(t,y)}{\phi(t,x)} \,dy
                + \int_{\substack{|x-y|\geq\gamma(t)\\ |y|\leq |x|}} J(x-y) \frac{\phi(t,y)}{\phi(t,x)} \,dy \right) \phi(t,x), \\
          & \leq \left( II_1 + II_2 \right) \phi(t,x).      
\end{split}
\end{equation}
The first part of the integral, $II_1$, is estimated using the monotonicity of $\phi$ in the spatial variable to obtain
\begin{equation*}
      II_1 \leq \int_{\substack{|x-y|\geq\gamma(t)\\ |y|\geq |x|}} J(x-y)\,dy \leq \int_{|h|\geq\gamma(t)} J(h)\,dh.
\end{equation*}

We now turn to the second part term in~\cref{chris13}, $II_2$. Recall that we are assuming, by the symmetry of the problem that $x$ is positive.  We  decompose the integral in four pieces, one integral close to $y=-x$, two integrals close to $y=0$ and the last one centered around $|y|=x/2.$ More precisely,
\begin{equation*}
\begin{split}
      II_2 & = \int_{\substack{|x-y|\geq\gamma(t)\\ |y|\leq |x|}} \! \! \! J(x-y)\frac{\phi(t,y)}{\phi(t,x)} dy 
            \leq \int_{-x}^{-\gamma(t)} \!\!\!\!\!\!\!\!\! J(x-y)\frac{\phi(t,y)}{\phi(t,x)}\,dy   
                 + \int_{-\gamma(t)}^{0} \!\!\!\!\!\! J(x-y)\frac{\phi(t,y)}{\phi(t,x)}\,dy \\[0.01cm]
           &   +  \int_{0}^{\gamma(t)}\!\!\!\!\!  J(x-y)\frac{\phi(t,y)}{\phi(t,x)}\,dy +  \int_{\gamma(t)}^{x-\gamma(t)} \!\!\! \!\!\!\!\! J(x-y)\frac{\phi(t,y)}{\phi(t,x)}\,dy  
           =III_1 + III_2+III_3+III_4.
\end{split}
\end{equation*}
We now estimate each of the four integrals $III_1, III_2, III_3, III_4$ in turn and show that each tends to 0 in the limit $t\to\infty$.  

Let us first notice that for all $x,y\in\R$ and $t>0,$ 
\begin{equation}\label{chris14}
   J(x-y)\frac{\phi(t,y)}{\phi(t,x)} = \frac{J(x-y)J(y)}{J(x)} \frac{J(x) +e^{-t}}{J(y)+e^{-t}}.
\end{equation}
In addition, for all $x\in\mathcal{E}_\ell(t)$ and  we  have on the one hand
\[
J(x)\leq e^{-t},
\]
and since $y\in(-x,x)$,
\[
  \frac{J(x) +e^{-t}}{J(y)+e^{-t}}\leq 1. 
\]
To proceed further, we require the following useful fact, in which we see the need for the intricate description in~\cref{def:newgamma}.  From its definition, it is clear that if $t>0$ and $x \in \mathcal{E}_\ell(t)$, then $2\gamma(t) \leq x$.

\paragraph{\#\# Estimation of $III_1$:}
First we estimate $III_1$.  Due to the limits of integration, we have that $|x-y| \geq |x|$, giving
\begin{align*}
   III_1 = \int_{-x}^{-\gamma(t)} {  \frac{J(x-y)J(y)}{J(x)} \frac{J(x) +e^{-t}}{J(y)+e^{-t}} \,dy }  
           \leq \int_{-x}^{-\gamma(t)}  J(y) \,dy  
           \leq  \int_{\gamma(t)}^{\infty} J(y) \,dy.
\end{align*}
Since $J$ is integrable and since $\gamma(t) \to \infty$ as $t\to\infty$, this tends to zero as $t$ tends to infinity.

\paragraph{\#\# Estimation of $III_2$.}
Next we estimate $III_2$.  Due to the limits of integration $J(x-y) \leq J(x)$.  Using this and~\cref{chris14}, we have
\begin{equation*}
\begin{split}
   III_2 &= \int_{-\gamma(t)}^{0} \frac{J(x-y)J(y)}{J(x)} \frac{J(x) +e^{-t}}{J(y)+e^{-t}}\,dy 
         \leq \int_{-\gamma(t)}^{0}  \frac{J(x-y)}{J(x)} (J(x)+e^{-t}) \,dy \\
          &\leq \int_{-\gamma(t)}^{0}  (J(x)+e^{-t}) \,dy
          \leq \left(J(x) + e^{-t}\right) \gamma(t)
          \leq 2 e^{-t} \gamma(t) \,.
 \end{split}
\end{equation*}
In the last step we used that $x \geq f^{-1}(t)$ so that $J(x) \leq e^{-t}$.
Then $III_2$ tends to zero as $t \to \infty$ because, by construction, 
 $\gamma(t) \leq e^{(1-\alpha)t}$.

\paragraph{\#\# Estimation of $III_3$.}
To estimate the third integral, $III_3$, we first notice that $J(x- y) \leq J(x-\gamma(t))$ for all $y$ in the domain of integration $[0,\gamma(t)]$.  Here we are using the definition of $\gamma$~\cref{def:newgamma} and the fact that $x \geq 2 \gamma(t)$, observed above. Using this inequality and~\cref{chris14}, again, we obtain
\begin{equation}\label{chris15}
   III_3 = \int_{0}^{\gamma(t)}  \frac{J(x-y)J(y)}{J(x)} \frac{J(x) +e^{-t}}{J(y)+e^{-t}}\,dy 
         \leq \int_{0}^{\gamma(t)} { \frac{J(x-\gamma(t))}{J(x)} (J(x)+e^{-t}) \,dy }.
\end{equation}
The main difficulty is in obtaining a bound, independent of time, of $J(x-\gamma(t))/J(x)$.  We obtain this bound now.
From the definition of $\gamma(t)$~\cref{def:newgamma}, we see that 
\begin{equation}\label{chris16}
\begin{array}{ll}
\dfrac{J(x-\gamma(t))}{J(x)} & = \exp\left\{f(x)-f\left(x-\gamma(t)\right)\right\}, \\
                     & \ds \leq \exp\left\{\max_{z \in[x-\gamma(t),x]} f'(z)\gamma(t)\right\},\\
                    & \ds \leq \exp\left\{\max_{z \in[f^{-1}(t)/2,x]} f'(z)\gamma(t)\right\}
                     \leq \exp(f'(f^{-1}(t)/2)\gamma(t))\leq e.
\end{array}	
\end{equation}
The last step follows from the fact that $x \geq f^{-1}(t)$ and that $x -\gamma(t) \geq x/2$, which follows from the inequality $x \geq 2\gamma(t)$.

When $t$ is small, $\gamma(t)$ is bounded; hence, $\max_{z\in[\gamma(t),x]}f'(z) \gamma(t)$ is bounded.  From this and~\cref{chris16}, it follows that $J(x-\gamma(t))/J(x) \leq C$, for some constant $C$ independent of time.  When $t$ is sufficiently large, the concavity of $f$, given in \Cref{hyp:J}, implies that $f'$ is monotonic so that~\cref{chris16} becomes
\begin{equation}\label{chris17}
	\frac{J(x-\gamma(t))}{J(x)} \leq \exp\left\{ f'\left(\frac{f^{-1}(t)}{2}\right) \gamma(t)\right\}.
\end{equation}
From the definition of $\gamma$,~\cref{def:newgamma}, we have that $\gamma(t) \leq f'(f^{-1}(t)/2)^{-1}$.  Putting this together with~\cref{chris17} yields
\[
	\frac{J(x-\gamma(t))}{J(x)} \leq e.
\]

Hence, there is a constant $C$, indepedent of time, such that $J(x-\gamma(t))/J(x) \leq C$. Using this in our estimate~\cref{chris15}, we get the bound
\[
	III_3 \leq C(J(x) + e^{-t})\gamma(t)
		\leq C(J(2\gamma(t)) + e^{-t}\gamma(t)).
\]
Using \Cref{hyp:J} and our arguments above, we see that the right hand side tends to zero, as desired.

\paragraph{\#\#  Estimation of $III_4$}
For all $y$ in the domain of integration $[\gamma(t),x-\gamma(t)]$, we have that $y \leq x$ so that $J(x) \leq J(y)$.  Re-arranging the fourth integral with this inequality yields
\begin{equation}\label{chris20}
\begin{split}
   III_4 &= \int_{\gamma(t)}^{x-\gamma(t)} \frac{J(x-y)J(y)}{J(x)} \frac{J(x) +e^{-t}}{J(y)+e^{-t}}\,dy
   	\leq \int_{\gamma(t)}^{x-\gamma(t)}  \frac{J(x-y)J(y)}{J(x)} \,dy\\
	&= \int_{\gamma(t)}^{x/2}  \frac{J(x-y)J(y)}{J(x)} dy + \int_{x/2}^{x-\gamma(t)}  \frac{J(x-y)J(y)}{J(x)} dy
           = 2 \int_{\gamma(t)}^{x/2}  \frac{J(x-y)J(y)}{J(x)} \,dy,
\end{split}
\end{equation}
where we have changed the variable $y \mapsto x-y$ to obtain the last equality. Notice that $\gamma(t)\leq x/2 \leq x- \gamma(t)$ due to our observation that $x \geq 2\gamma(t)$. To estimate the last term in~\cref{chris20}, we need to distinguish between the cases when $xf'(x)$ is bounded and when $xf'(x)$ is unbounded.

If $x f'(x)$ is bounded, we have
\[
	III_4
		\leq 2\frac{J(x/2)}{J(x)} \int_{\gamma(t)}^{x/2} J(y)dy
		\leq 2\frac{J(x/2)}{J(x)} \int_{\gamma(t)}^\infty J(y)dy,
\]
because $y \in (\gamma(t), x/2)$ and thus $J(x-y) \leq J(x/2)$.  By Taylor's theorem, we have that
\[
	\frac{J(x/2)}{J(x)}
		= \exp\left\{ f(x) - f(x/2)\right\}
		\leq \exp\left\{ \sup_{\xi \in (x/2,x)} f'(\xi) \frac{x}{2}\right\}.
\]
Using the eventual concavity of $f$ along with the boundedness of $f'(\dfrac{x}{2}) \dfrac{x}{2}$, we have that $J(x/2)/J(x) \leq C$.  Hence
\[
	III_4 \leq C \int_{\gamma(t)}^\infty J(y) dy,
\]
which tends to $0$ as $t$ tends to infinity.

If $xf'(x)$ tends to infinity, we require a different argument.  First, notice that, if $J(x-y)J(y)/J(x)$ is bounded above by an integrable function $\overline J$ uniformly in $x$, then we are finished because
\[
	III_4 \leq 2\int_{\gamma(t)}^\infty \overline J(y) dy,
\]
which tends to $0$ as $t$ tends to infinity.  We prove this now.

When $t$ is small, the domain of integration in $III_4$ is bounded as is $J(x-y) J(y)/J(x)$.  Hence, we may restrict to considering only the case when $t$ is large enough that $f'$ is decreasing on $(\gamma(t),\infty)$.

First, notice that
\[
	\frac{J(x-y)J(y)}{J(x)} = \exp \left\{f(x) - f(y) - f(x-y)\right\}
		= \exp \left\{ y\int_0^1 f'(x- ys) ds - f(y)\right\}.
\]
Since $f'$ is decreasing, we have that
\[
	\frac{J(x-y)J(y)}{J(x)} 
		\leq \exp \left\{ yf'(y) - f(y)\right\}.
\]
From \Cref{hyp:J}, there exists $\e_0\in(0,1)$ such that $yf'(y)/f(y) \leq 1 - \e_0$ for $y \geq \gamma(t)$ and $t$ sufficiently large.  Hence, we obtain that
\[
	\frac{J(x-y)J(y)}{J(x)} 
		\leq \exp \left\{ - \e_0 f(y)\right\}.
\]
On the other hand, since $y f'(y)$ tends to infinity, it is clear that $e^{-\e_0f}$ is integrable, finishing the proof that $III_4$ tends to $0$ as $t$ tends to infinity.

In conclusion, we have show that when $x \in \mathcal{E}_\ell(t)$,
\[
	I_2(t,x) = \left(II_1 + (III_1 + III_2 + III_3 + III_4)\right)\phi(t,x)
\]
where $II_1$ and $III_k$ tend to zero as $t \to \infty$ uniformly in $x$ for $k \in \{1,2,3,4\}$.  This concludes the proof in the long range region $x \in \mathcal{E}_\ell(t)$.

\paragraph{\# The short range region $x \in \mathcal{E}_s(t)$.}
We now turn to the simpler case where  $x$ is in the short range region $\mathcal{E}_s(t),$ that is $|x|\leq f^{-1}(t).$  In this region, notice that the function $\phi$ is bounded from below $\phi(t,x)\geq 1/2$. We can estimate directly as follows
\begin{equation*}
  I_2 =  \int_{|x-y|\geq\gamma(t)} J(x-y)\big( \phi(t,y) - \phi(t,x) \big)\,dy \leq 2 \left( \int_{|h|\geq\gamma(t)}  J(h)\,dh \right) \phi(t,x).
\end{equation*}
Of course, $\int_{|h| \geq \gamma(t)} J(h)dh$ tends to $0$ as $t$ tends to infinity.

This concludes the proof of~\cref{chris12} and, thus, the proof of \Cref{prop:subsuper}.
\end{proof}

\section{The propagation regime: the proof of Theorem~\ref{thm:n_conv}}$ $\\

\noindent We now deduce from Theorem~\ref{thm:u_conv} the asymptotic behaviour of $\nep$ as $\epsilon$ tends to $0$.

\begin{proof}[{\bf Proof of \Cref{thm:n_conv}}]
We first look at the limit in any compact subset of $\{u>0\}$, and then we focus on the limit in any compact subset of $\Int\{u = 0\}$.

\subsubsection*{Part (a): convergence of $\nep$ in $\mathrm{Int}\{u>0\}$}
Fix any compact subset $K\subset\{u > 0\}$, there exists a positive constant $\alpha$ such that for all $(t,x) \in K$, we have $u(t,x) > \alpha $.  Due to \cref{thm:u_conv}, we know that $\uep$ converges locally uniformly on $(0,+\infty)\times\R$ to $u(t,x) = \max(f(x)-t,0).$  Hence, for all $\e$ sufficiently small, $\uep(t,x) \geq \alpha/2$ for $(t,x) \in K$.  Then for $\e$ sufficiently small, we have that, for $(t,x) \in K$,
\[
	\nep(t,x) = e^{-\frac{\uep(t,x)}{\e}}
		\leq e^{-\frac{\alpha}{2\e}}.
\]
Taking the limit $\e\to0$ yields the uniform convergence of $\nep$ to zero on $K$.
This proves point (a) of \Cref{thm:n_conv}.

\subsubsection*{Part (b): convergence of $\nep$ in $\mathrm{Int}\{u=0\}$}
First, we use the maximum principle to show that, locally uniformly on $\{t>0\}$,
\begin{equation}\label{eq:below_one}
	\limsup_{\e\to0} \|\nep(t,\cdot)\|_\infty \leq 1.
\end{equation}
Indeed, it follows from~\cref{cond:n0_likeJ} that there exists $\overline C$, independent of $\e$ such that $\nep(0,x) \leq \overline C$ for all $\e>0$.  From~\cref{eq:nep_prop}, $\overline n(t) := 1+ \overline C e^{-t/\e}$ is a super-solution to $\nep$.  Hence, we have that $\nep(t,x) \leq 1+ \overline C e^{-t/\e}$, which establishes the estimate~\cref{eq:below_one}.

Let $K$ be a compact subset of  $\Int\{u = 0\}$.  
Our goal is to show that $\nep=\exp(-\uep/\epsilon)$ converges to $1$ uniformly in $K.$
Recall that $\uep$ is a solution to~\cref{eq:uep_prop}; that is, for all $(t,x) \in \R_+\times \R$,
\begin{equation}\label{eq:neps_LowerBound}
\begin{split}
      \nep(t,x) &= 1 + \partial_t\uep(t,x) - \int_{-\infty}^\infty J(h) \left[ 1 - e^{ - \frac{1}{\e} \left(  \uep \left( t,\psi_\e^{-1}\left( \psi_\e( x ) - h  \right) \right) - u_\e(t,x) \right)}\right] dh.
\end{split}
\end{equation}
We need a lower--bound for the right hand side to conclude. To do so, we follow the approach of \cite{EvansSouganidis,SMSM} which consists of replacing $\uep$ by a well chosen test function to obtain a sharp estimate. Fix any $(t_0,x_0)\in K$.  Then, 
since $K \subset \{u < 0\}$, we may fix $\delta_0>0$ such that $f(x_0) < t_0-\delta_0$. For all $(t,x) \in \R_+\times \R$, let
\begin{equation*}
 \chi(t,x) = \max\left\{0,f(x) - (t_0 -\delta_0)\right\}+(t-t_0)^2.
\end{equation*}
 Since $\chi$ is nonnegative and $u = 0$ on $K$, $u - \chi$ has a maximum at $(t_0,x_0)$, strict and local in $t$, global in $x$. In order to follow the convergence, we also define the following perturbed test function $\chi_\e$ as follows.  For any $(t,x) \in \R_+ \times \R$, let
\begin{equation*}
\chi_\e(t,x):= \e \ln \left( 1 + e^{-\frac{t_0 - \delta_0}{\e}} J( x ) ^{-\frac{1}{\e}} \right) + (t-t_0)^2.
\end{equation*}
Observe that we may reformulate $\chi_\epsilon$ in term of $\phi$, defined in~\cref{eq:defphi}, and the space scaling $\psi_\epsilon$ as
\begin{equation}\label{eq:chi_phi}
	\chi_\epsilon(t,x) = -\epsilon \ln\left( \phi\left( \frac{t_0-\delta_0}{\epsilon} , \psi_\epsilon(x) \right)\right)+ (t-t_0)^2.
\end{equation}

Observe that $\chi_\e (t,x) \to \chi(t,x)$ as $\epsilon\to0$ locally uniformly on $(0,\infty)\times\R.$  Then there exists a sequence $(t_\e,x_\e)$ such that $u_\e - \chi_\e$ has a maximum on $(t_0-r,t_0+r)\times \R$ for some small $r>0$ at $(t_\e,x_\e)$ that is strict in $t$ and such that $t_\e \to t_0$ as $\epsilon\to0$.  We note that the fact that $u_\e - \chi_\e$ has a maximum that is global in $x$ is not immediate from the locally uniform convergence of $\uep$ to $u$; however, it follows easily from \Cref{prop:subsuper}. We now plug our test functions $\chi_\epsilon$ into~\cref{eq:neps_LowerBound} and obtain
\begin{equation}
\begin{split}
& \nep(t,x) = 1+ \partial_t \uep(t,x)\\
	& - \!\!\int_\R \!\! J(h)\left[1 -
                  e^{\! -\! \frac1\e \left(  
                                \left(u_\e - \chi_\e \right)\left(t, \psi_\e^{-1}\left( |\psi_\e( x ) - h| \right) \right)
                                - \left(u_\e - \chi_\e\right) \left(t, x \right)   
                                + \chi_\e\left(t, \psi_\e^{-1}\left(|\psi_\e(x) - h |\right) \right) 
                                - \chi_\e\left(t,x\right)  \right) }\right]\!\!dh.
\end{split}\end{equation}
From the maximum property of $\uep-\chi_\e$ at $(t_\e,x_\e)$, we know that  $\partial_t \chi_\e (t_\e,x_\e)=\partial_t \uep (t_\e,x_\e)=2(t_\e-t_0)$ and that for all $h \in\R$,
\[
   \left(u_\e - \chi_\e \right)\left(t_\e, \psi_\e^{-1}\left( |\psi_\e( x_\e) - h| \right) \right)
                                - \left(u_\e - \chi_\e\right) \left(t_\e, x_\e \right) \leq 0.
\]
Thus, we obtain, at $(t_\e,x_\e)$,
\begin{equation*}
\nep(t_\epsilon,x_\e) \geq 1+ 2(t_\e-t_0) -
             \int_\R J(h)\left[1 - 
                  e^{ - \frac1\e \left(  
                                 \chi_\e\left(t_\e, \psi_\e^{-1}\left( \psi_\e( x ) - h  \right) \right) 
                                - \chi_\e\left(t_\e,x_\e\right)  \right) }\right]
          dh.
\end{equation*}   
Then, the link between $\chi_\e$ and $\phi$ yields
\begin{equation}
\begin{split}
\nep(t_\epsilon,x_\e) & \geq 1+ 2(t_\e-t_0) -
             \int_\R J(h) \left[ 1 - \frac{ \phi\left( \frac{t_0-\delta_0}{\epsilon} ,   \psi_\e( x_\e) - h \right)  }{ \phi\left( \frac{t_0-\delta_0}{\epsilon} ,  \psi_\e( x_\e)  \right)  }
                           \right] dh \\
                            & = 1+ 2(t_\e-t_0)  
                                    + \frac{J*\phi - \phi}\phi \left( \frac{t_0-\delta_0}{\epsilon} ,  \psi_\e( x_\e)\right).
\end{split}
\end{equation}
Using~\cref{chris11}, this implies
\[
	\nep(t_\epsilon,x_\e)
		\geq 1+ 2(t_\e-t_0) - \theta\left(\frac{t_0-\delta_0}{\e}\right),
\]
where we recall that $\theta(s) \to 0$ as $s \to \infty$.  Since $t_\e \to t_0$ as $\e\to\infty$ and since $t_0 - \delta_0 > f(x_0) \geq 0$, we obtain
\begin{equation}\label{chris31}
	\liminf_{\e\to0} \nep(t_\e,x_\e) \geq 1.
\end{equation}
To conclude, we must bootstrap~\cref{chris31} to deduce information about $\nep(t_0,x_0)$. 
By construction of $(t_\e,x_\e)$,
\begin{equation*}
  u_\e(t_\e,x_\e) - \chi_\e(t_\e,x_\e) \geq u_\e(t_0,x_0) - \chi_\e(t_0,x_0),
\end{equation*}
which implies that 
\begin{equation*}
\begin{array}{ccl}
u_\e(t_0,x_0)-u_\e(t_\e,x_\e)  & \leq & \chi_\e(t_0,x_0) -\chi_\e(t_\e,x_\e)  \\
                               & \leq & \chi_\e(t_0,x_0)
                               = \e \ln \left( 1 + e^{-\frac{t_0 - \delta_0}{\e}}\left[ J(\psi_\e(x_0)) \right]^{-1} \right).
\end{array}
\end{equation*}
Since $\nep = \exp(-u_\epsilon/\epsilon)$, we obtain
\begin{equation*}
\nep(x_\e,t_\e) \leq n_\e(x_0,t_0)\left( 1 + \Big(e^{-(t_0 - \delta_0)} J(x_0)^{-1}\Big)^{\frac1\epsilon}\right).
\end{equation*}
Using that $f(x_0) < t_0 + \delta_0$ along with~\cref{chris31}, we obtain
\begin{equation}\label{chris32}
	1 \leq \liminf_{\e\to0} \nep(t_\e,x_\e)
		\leq \liminf_{\e \to 0} \nep(t_0,x_0).
\end{equation}
The combination of~\cref{eq:below_one} and \cref{chris32} concludes the proof.
\end{proof}

\section{The small mutation regime}\label{sec:proof_small_mutation}$ $\\

This section is devoted to the proof of~\Cref{theo:mut}.  We obtain some {\em a priori} estimates on $u_\epsilon$ and $\partial_x u_\epsilon$ in order to take the half--relaxed limits of $u_\epsilon$ to obtain $u$, the solution of~\cref{eq:ulim}. We then use this limit $u$ to estimate the level sets of $\nep$ as $\e \to 0$. With the strategy in mind, we proceed with the proof of \Cref{theo:mut}.

\subsection{Proof of Theorem \ref{theo:mut}}\label{sec:conv_uep_mut}

We start with the proof of \Cref{theo:mut} (i), which we rephrase into the following lemma for legibility.

\begin{lemma}\label[lem]{lem:half_relaxed_limits}
	Let $\underline u$ and $\overline u$ be defined by~\eqref{eq:half_relaxed}.  Then $\underline u$ and $\overline u$ satisfy
	\begin{equation*}
	\begin{split}
		 & \min\left\{ \partial_t \ulbar  +  \int_\R \left[  e^{\frac{\sign(h) f( h )}{f'(0)}\partial_x \ulbar} -1 \right]J(h)  \, \dd h + 1   , \ulbar \right\}\geq 0, \ \text{ and} \\
		 & \min\left\{ \partial_t \overline u + \int_\R\left[ e^{\frac{\sign(h) f( h )}{f'(0)}\partial_x \overline u} -1 \right] J(h)  \, \dd h + 1, \overline u \right\}\leq 0
\end{split}
	\end{equation*}
on $(0,+\infty)\times\R$.	
\end{lemma}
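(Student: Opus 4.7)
The approach is the standard half-relaxed limits method of Barles--Perthame. I describe in detail the super-solution property for $\underline u$; the sub-solution property for $\overline u$ follows by symmetric arguments at a local maximum with signs reversed.

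\textbf{Setup.} Fix a smooth test function $\varphi$ such that $\underline u - \varphi$ attains a strict local minimum at some $(t_0, x_0)$. Since $n_\e \leq 1$, we have $u_\e \geq 0$ and hence $\underline u \geq 0$, so the min inequality is automatic when $\underline u(t_0, x_0) = 0$. I assume from now on $\underline u(t_0, x_0) > 0$. A standard extraction produces $\e_n \to 0$ and $(t_n, x_n) \to (t_0, x_0)$ with $u_{\e_n} - \varphi$ having a local minimum at $(t_n, x_n)$ and $u_{\e_n}(t_n, x_n) \to \underline u(t_0, x_0)$. In particular, $n_{\e_n}(t_n, x_n) = \exp(-u_{\e_n}(t_n, x_n)/\e_n) \to 0$.

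\textbf{Splitting the integral.} Evaluating \eqref{eq:uep_mut} at $(t_n, x_n)$ rewrites as $\partial_t \varphi(t_n, x_n) + I_{\e_n} = n_{\e_n}(t_n, x_n)$ with $I_\e := \int_\R e^{-\frac{1}{\e}(u_\e(t_\e, x_\e - \psi_\e^{-1}(h)) - u_\e(t_\e, x_\e))} J(h)\, dh$. Fix $R > 0$ and split at $|h| = R$. Since $\psi_\e^{-1}(h) = \sign(h) f^{-1}(\e f(|h|)) \to 0$ uniformly in $|h| \leq R$, the local minimum property gives, for $\e_n$ small enough,
\begin{equation*}
e^{-\frac{1}{\e_n}(u_{\e_n}(t_n, x_n - \psi_{\e_n}^{-1}(h)) - u_{\e_n}(t_n, x_n))} \leq e^{-\frac{1}{\e_n}(\varphi(t_n, x_n - \psi_{\e_n}^{-1}(h)) - \varphi(t_n, x_n))}.
\end{equation*}
On $|h| \geq R$, the estimate \eqref{ineq:estim_ueph} combined with the identity $f(|\psi_\e^{-1}(h)|) = \e f(|h|)$ yields the uniform bound $e^{-\frac{1}{\e}(u_\e(t_\e, x_\e - \psi_\e^{-1}(h)) - u_\e(t_\e, x_\e))} J(h) \leq e^{-(1-A)f(|h|)}$, whose tail integral $\eta(R) := \int_{|h| \geq R} e^{-(1-A)f(|h|)}\, dh$ tends to zero as $R \to \infty$; integrability holds because $A < 1 - 1/\mu$ combined with \eqref{hyp:J_poly} yields polynomial decay of strictly supercritical order.

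\textbf{Computing the limit.} The inverse function theorem at $0$, justified by \Cref{hyp:Jmut}, gives $f^{-1}(y)/y \to 1/f'(0)$ as $y \to 0^+$, hence $\psi_\e^{-1}(h)/\e \to \sign(h) f(|h|)/f'(0)$ pointwise. A first-order Taylor expansion of $\varphi$ then produces, for each $h \neq 0$,
\begin{equation*}
-\frac{1}{\e_n}\bigl(\varphi(t_n, x_n - \psi_{\e_n}^{-1}(h)) - \varphi(t_n, x_n)\bigr) \longrightarrow \frac{\sign(h)\, f(|h|)}{f'(0)} \partial_x \varphi(t_0, x_0).
\end{equation*}
Dominated convergence (using $\|\partial_x \varphi\|_\infty$ on $|h| \leq R$ and the tail bound above) together with the identity $I_{\e_n} = n_{\e_n}(t_n, x_n) - \partial_t \varphi(t_n, x_n) \to -\partial_t \varphi(t_0, x_0)$ yield, after letting $R \to \infty$,
\begin{equation*}
-\partial_t \varphi(t_0, x_0) \leq \int_\R e^{\frac{\sign(h) f(|h|)}{f'(0)} \partial_x \varphi(t_0, x_0)} J(h)\, dh,
\end{equation*}
which rearranges into the required super-solution inequality.

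\textbf{Main obstacle.} The principal difficulty is that the nonlocal integral sees all of $\R$ while the viscosity test inequality is merely local, so one must combine the local test-function comparison on $|h| \leq R$ with the a priori global Lipschitz-type estimate from \Cref{lem:estim_uep} on $|h| \geq R$. The constraint $A < 1 - 1/\mu$ is exactly what is needed to make $e^{-(1-A)f}$ integrable under \eqref{hyp:J_poly}, which simultaneously guarantees that the limit Hamiltonian is well-defined. The sub-solution statement for $\overline u$ follows by replacing local minimum by local maximum: this reverses the pointwise inequality, producing a lower bound on $I_\e$ by the $\varphi$-analog on $|h| \leq R$, while the far-field contribution now drops out simply by non-negativity of the integrand.
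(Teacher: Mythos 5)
Your overall strategy matches the paper's --- splitting the nonlocal integral at a finite cutoff, comparing with the test function on the compact piece, controlling the tail via~\eqref{ineq:estim_ueph} and the integrability forced by $A<1-1/\mu$, then sending $\e\to 0$ followed by the cutoff to infinity --- and the computations in your ``Splitting the integral'' and ``Computing the limit'' steps are correct. There is, however, a genuine logical gap in your handling of the constraint. You claim the min inequality ``is automatic when $\underline u(t_0,x_0)=0$'' and restrict to $\underline u(t_0,x_0)>0$. For the \emph{super}-solution this is false: $\min\{F,\underline u\}\geq 0$ means $F\geq 0$ \emph{and} $\underline u\geq 0$, so at points where $\underline u=0$ you must still prove the Hamilton--Jacobi inequality, and your argument never covers these points. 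The restriction is also unnecessary: you only need $n_{\e_n}(t_n,x_n)\geq 0$, not $n_{\e_n}(t_n,x_n)\to 0$; from the equation $I_{\e_n}=n_{\e_n}(t_n,x_n)-\partial_t\varphi(t_n,x_n)\geq -\partial_t\varphi(t_n,x_n)$, and combining this with your upper bound on $I_{\e_n}$ (test-function comparison on $|h|\leq R$ plus the tail estimate) closes the super-solution argument with no hypothesis on $\underline u(t_0,x_0)$.

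The case distinction you introduced does belong in the proof, but on the \emph{other} side, so ``symmetric arguments'' is misleading. For the \emph{sub}-solution, $\min\{F,\overline u\}\leq 0$ means $F\leq 0$ \emph{or} $\overline u\leq 0$; since $\overline u\geq 0$, the case $\overline u(t_0,x_0)=0$ is genuinely automatic, and the remaining case $\overline u(t_0,x_0)>0$ is precisely where $n_{\e_n}\to 0$ is both available and necessary. There you only get a one-sided (lower) bound on $I_{\e_n}$: the far-field contribution drops to $\geq 0$, but the right-hand side $n_{\e_n}$ now sits on the wrong side of the inequality, and without $n_{\e_n}\to 0$ you land on $\partial_t\varphi+\int_\R\left[e^{\sign(h)f(|h|)\partial_x\varphi/f'(0)}-1\right]J\,\dd h\leq 0$ instead of the required $\leq -1$, i.e.\ your bound is off by exactly the constant $1$. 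In short: the super-solution needs $n_\e\geq 0$ and no case split, while the sub-solution needs $n_\e\to 0$ and the split on $\overline u=0$ versus $\overline u>0$. This is exactly how the paper distributes the two mechanisms.
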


\begin{proof} We first prove that the lower half--relaxed limit $\underline u$ of $\uep$ is a viscosity super--solution to~\cref{eq:ulim}, where we recall that $\underline{u}$ is defined by
\[
  \underline{u} (t,x):= \liminf_{\underset{(s,y)\to (t,x)}{\e\to 0}} \ u_\e(s,y).
\]
First, $\underline u \geq 0$ because $u_\eps \geq 0$ for all $\e$.
Let  $\varphi$ be a test function in  $\mathcal{C}^1 ((0,+\infty)\times\R)$ such that $\underline u-\varphi$ has a strict global minimum equal to $0$ at some point $(t_0,x_0)$ with $t_0>0$.  Our goal is to show that 
\begin{equation}\label{eq:u_super_soln}
	 \partial_t \varphi(t_0,x_0) + \int_\R \left[  e^{\frac{\sign(h) f( h )}{f'(0)}\partial_x \varphi(t_0,x_0)} - 1 \right] J(h)  \, \dd h + 1 \geq 0.
\end{equation}
Fix any $M>0$.  We eventually take the limit $M \to \infty$.  Using the definition of  $\underline u$ and classical arguments (see \cite{barles1994}), we find $r>0$ and a sequence $(t_\e,x_\e)$ such that $\uep-\varphi$ has a minimum at $(t_\e,x_\e)$ in $(t_0-r,t_0 + r)\times B_{2M}(x_0)$ and such that $(t_\e,x_\e)$ converges to $(t_0,x_0)$ as $\epsilon\to0$ after passing to a sub-sequence, which we denote the same way, if necessary.  Since $u_\e$ is a solution of \cref{eq:uep_mut},
\begin{equation}\label{eq:half_relaxed_prelim}
	\partial _t \varphi(t_\epsilon,x_\epsilon) + \int_\R \left[e^{-\frac{1}{\e}\left(\uep\left( t_\eps , x_\eps -  \sign(h)f^{-1}( \e f(h))\right)-\uep(t_\epsilon,x_\epsilon) \right)} -1\right]J(h) \dd h  + 1 \geq 0.
\end{equation}

The proof now hinges on estimating the integral  in~\cref{eq:half_relaxed_prelim}.  By construction of $(t_\e,x_\e)$ we have
\[
	u_\e(t_\e, x_\e) - \varphi(t_\e,x_\e) \leq u_\e(t,x) - \varphi(t,x)
\]
for all $(t,x)\in (t_0-r,t_0+r)\times B_{2M}(x_0)$.  Also, notice that $|x_0 - x_\e| \leq M$ and $\vert \sign(h)f^{-1}( \e f( h )) \vert \leq M$ for all $h \in [0,M]$ for $\e$ small enough.  Hence, we have 
\begin{equation}\label{eq:I_splitting}
\begin{split}
\int_\R &e^{-\frac{1}{\e}\left(\uep\left( t_\eps , x_\eps -  \sign(h)f^{-1}( \e f(h))\right)-\uep(t_\eps , x_\eps) \right)} J(h) \dd h\\
 			&\qquad\leq\int_{[-M,M]} e^{-\frac{1}{\e}\left(\varphi\left( t_\eps , x_\eps -  \sign(h)f^{-1}( \e f(h))\right)-\varphi(t_\eps , x_\eps) \right)} J(h) \dd h \\
 			 &\qquad\qquad\qquad + \int_{\R\setminus[-M,M]} e^{-\frac{1}{\e}\left(\uep\left( t_\eps , x_\eps -  \sign(h)f^{-1}( \e f(h))\right)-\uep(t_\eps , x_\eps) \right)} J(h) \dd h. \\
 			&\qquad:= I_1^{(M,\e)} + I_2^{(M,\e)}.
\end{split}
\end{equation}

First, we address the integral set on $[-M,M]$, which we denote $I_1^{(M,\e)}$. Since $h$ lies in a bounded set in this integral, $\varphi$ is $\mathcal{C}^1$, and $\lim_{\e \to 0}(t_\e,x_\e) = (t_0,x_0)$, we have
\begin{equation*}
\lim_{\e \to 0} \; \frac1\epsilon\left(\varphi\left( t_\eps , x_\eps -  \sign(h)f^{-1}( \e f(h))\right)-\varphi(t_\eps , x_\eps) \right) = \frac{\sign(h) f(h)}{f'(0)} \partial_x \varphi(t_0,x_0)
\end{equation*}  
uniformly in $[-M,M]$.
Hence, we obtain
\begin{equation}\label{eq:I_1_e_to_zero}
	\lim_{\e \to 0} I^{(M,\e)}_1
		= \int_{[-M,M]} {  e^{\frac{\sign(h) f(h)}{f'(0)}\partial_x \varphi} J(h)  \, \dd h}.
\end{equation}

Next, we address the integral set on $\R\setminus[-M,M]$, which we denote $I_2^{(M,\e)}$.  Using estimate~\cref{ineq:estim_ueph} from~\Cref{lem:estim_uep} on $u_\e$, we have that, for all $h\in\R$,
\begin{equation*}
\uep\left( t_\eps , x_\eps -  \sign(h)f^{-1}( \e f(h))\right)-\uep(t_\eps , x_\eps) \geq - A f(  \sign(h)f^{-1}( \e f(h))  ) = -A \eps f(h),
\end{equation*}
so that
\begin{equation*}
	I_2^{(M,\e)} \leq \int_{\R\setminus[-M,M]} e^{A f(h)} J(h) \dd h = 2\int_{M}^\infty e^{-(1-A) f(h)} \dd h.
\end{equation*} 
Recall, from~\cref{cond:uep0_mutation}, that $A < 1 - 1/\mu$ where $\mu = \liminf_{x\to\infty}x f'(x) > 1$.  This implies that the integrand above is integrable.  Indeed, fix $\alpha_A := \mu/2 - 1/(2 - 2A).$ An easy computation using only that $A < 1 - 1/\mu$ shows that $\alpha_A > 0$. Then from~\Cref{hyp:J} on the kernel $J$, we have that, if $M$ is sufficiently large, $f(h) \geq (\mu - \alpha_A) \ln(\vert h \vert) - C$ for some constant $C>0$, depending only on $f$ and $A$, and all $|h| > M$. Then
\begin{equation*}
\begin{array}{ccl}
\displaystyle I_2^{(M,\e)} 
\leq 2\int_{M}^\infty e^{-(1-A) f(h)} \dd h
	& \leq & \ds 2\int_M^\infty e^{-(1-A) (\mu - \alpha_A) \ln(\vert h \vert) + (1-A)C} \dd h \\
& \leq & \ds 2e^{(1-A)C}\int_{M}^\infty \vert h \vert^{-(1-A)(\mu-\alpha_A)} \dd h.
\end{array}
\end{equation*}
By our choice of $\alpha_A$, it follows that $(1-A)(\mu - \alpha_A) > 1$. Hence the right hand side tends to $0$ as $M\to\infty$, uniformly in $\e$.  

From the estimates~\cref{eq:I_splitting} and \eqref{eq:I_1_e_to_zero}, we can pass to the limit $\e\to 0$ and then $M\to \infty$ in~\cref{eq:half_relaxed_prelim} to obtain the inequality~\cref{eq:u_super_soln}.
This concludes the proof that $\underline u$ is a viscosity super--solution to~\cref{eq:ulim} as desired.

\

In order to show that $\overline u$ is a viscosity sub--solution to~\eqref{eq:ulim}, the steps are almost identical.  The only difference being that one must deal with the term $\exp(-{u_\e}/{\e})$. However, this is easily dealt with by splitting into cases when $u = 0$ and when $u > 0$.  As such, we omit the proof.
\end{proof}

We now move on to the proof of \Cref{theo:mut} (ii).
\begin{proof}[{\bf Proof of \Cref{theo:mut}(ii)}]
 The first step is to state and prove that $\underline u$ and $\overline u$ satisfy related initial conditions so that we may apply the comparison principle to conclude that $\overline u \leq \underline u$. As before, we detail the proof for $\ulbar$ but the proof for $\overline{u}$ is very  similar. The initial condition is
\begin{equation}\label{eq:inicondsuper}
\max\left\{ \min\left\{ \partial_t \ulbar - \left( 1- \int_\R{  e^{\frac{\sign(h) f(h)}{f'(0)}\partial_x \ulbar} J(h)  \, \dd h} \right) +1, \ulbar \right\rbrace,  \ulbar - u^0 \right\} \geq 0,
\end{equation}
on $\left\{ t = 0 \right\} \times \R$ in the viscosity sense,  where $u^0$ is the limit as $\epsilon\to0$ of the initial data sequence $(u_\epsilon^0)_\epsilon.$
To prove the inequality~\eqref{eq:inicondsuper}, let $\phi \in \mathcal{C}^1 \left( (0,+\infty) \times \R \right)$ be a test function such that $\ulbar  - \phi$ has a strict global minimum at $(t_0 = 0,x_0)$. We now prove that either
\begin{equation*} 
\ulbar(0,x_0)\geq u^0(x_0)
\end{equation*}
or 
\begin{equation*}
\partial_t \phi - \left( 1- \int_\R{  e^{\frac{\sign(h) f(h)}{f'(0)}\partial_x \phi} J(h)  \, \dd h} \right) +1 \geq 0 \ \hbox{ on } \left\{ t = 0, \right\} \times \R  \textrm{ and : } \ulbar(0,x_0) \geq 0.
\end{equation*}
Suppose that $\ulbar(0,x_0) < u^0(x_0)$. The argument now starts similarly as in the proof above. By the definition of the lower half--relaxed limit, there exists a sequence $\left( t_{\eps},x_{\eps}\right)$ of minimum points of $u_{\eps}-\psi$ satisfying $(t_{\eps},x_{\eps})\to(0,x_0)$ as $\epsilon\to0$. We first claim that there exists a sub--sequence $(t_{\eps_{k}},x_{\eps_{k}})_k$ of the above sequence, with $\eps_k\to 0$ as $k\to \infty$, such that $t_{\eps_{k}} > 0$, for all $k$.

Suppose that this is not true. Then, for $\epsilon$ small enough, $t_\epsilon=0$ and thus $u_\epsilon-\phi$ has a local minimum at $\left(0,x_{\eps}\right)$. It follows that, for all $(t,x)$ in some neighborhood of $(0,x_{\eps})$, we have
\begin{equation*}
u_{\eps}^0\left( x_{\eps}\right) - \phi\left( 0,x_{\eps}\right) 
\leq u_{\eps}\left( 0,x_{\eps}\right) - \phi\left( 0,x_{\eps}\right) \leq u_{\eps}\left( t , x  \right) - \phi\left( t , x\right).
\end{equation*}
Taking the lower half--relaxed limit as $\epsilon\to0$ and $(t,x)\to(0,x_0)$ on the right hand side of the above inequality, we obtain
\begin{equation*}
u^0(x_0) - \phi \left( 0 , x_0 \right) \leq \ulbar(0,x_0) - \phi \left( 0 , x_0 \right).
\end{equation*}
This contradicts our assumption $\ulbar(0,x_0) < u^0(x_0).$

Hence, there exists a sub--sequence $(t_{\eps_{k}},x_{\eps_{k}})_k$ such that $t_{\eps_{k}} > 0$, for all $k>0$. We can reproduce the same argument as in the proof of \Cref{lem:half_relaxed_limits} above to conclude that~\cref{eq:inicondsuper} holds.

We are now ready to conclude the proof of (ii). Due to standard arguments of viscosity solutions, see~\cite{barles1990comparison,barlesevanssouganidis}, we know that equation~\cref{eq:ulim} has a comparison principle for possibly discontinuous viscosity solutions.  As such, \Cref{lem:half_relaxed_limits} implies that $\overline u \leq \underline u$. 	
On the other hand, we recall that,  $\overline u \geq \underline u$ by construction.  It follows that $\overline u = \underline u$, which in turn implies that $\uep$ converges locally uniformly to a function $u$ satisfying the equation~\cref{eq:ulim}.  Moreover $u$ inherits the gradient bound~\cref{lem:estim_uep}
\begin{equation}\label{ineq:estim_u}
	\left\| \partial_x  u\right\|  _{L^\infty((0,\infty)\times\R )} \leq Af'(0).
\end{equation}
This concludes the proof.
\end{proof}

\subsection{Proof of \Cref{prop:convnullset}: convergence of $\nep$}\label{sec:conv_nulset_mut}
We now return to the behavior of $\nep$ as $\epsilon\to0.$
\begin{proof}[{\bf Proof of \Cref{prop:convnullset}}]\

\medskip

\noindent {\bf \# Convergence on the positive set.} 
Fix any $(t_0,x_0) \in \{(t,x) \in \R\times (0,\infty): u(t,x) > 0\}$.  Since $u_\epsilon$ converges locally uniformly to $u$, then on a small ball around $(t_0,x_0)$, there exists $\alpha>0,$ such that $u_\epsilon(t,x) \geq \alpha > 0$ for all $\epsilon$ sufficiently small. Using the Hopf--Cole transform, we see that for all $(t,x)$ in a small ball around $(t_0,x_0)$,
\[
	\nep(t,x) = e^{-\frac{u_\epsilon(t,x)}{\epsilon}}
		\leq e^{-\frac{\alpha}{\epsilon}}.
\]
 Taking the limit $\epsilon \to 0$ clearly yields the convergence of $\nep$ to zero.  Hence $\nep(t,x)$ converges to $0$ locally uniformly on $\{(t,x) \in \R\times(0,\infty): u(t,x) > 0\}$.

\

\noindent{\bf \# Convergence on the null set.} 
We next consider the case when $(t_0,x_0)$ is an element of the interior of $\{(t,x) \in \R\times (0,\infty): u(t,x) = 0 \}$.  Take $r$ sufficiently small so that $u$ vanishes on the ball $B_r(t_0,x_0)$.  Consider the test function
\[
	\phi(t,x) = \frac{2A f'(0)}{r} \vert x-x_0 \vert^2 + \vert t-t_0\vert^2, \ \hbox{ for all } t>0 \hbox{ and } x\in\R;
\]
Due to the finite difference bound that $u$ inherits from~\Cref{lem:estim_uep}, it is easy to check that $(u-\phi)$ has a strict local  maximum at $(t_0,x_0)$. In addition, the function $x\mapsto (u-\phi)(t_0,x)$ has a strict global maximum at $x_0.$ Indeed, we have that
\[
	u(t_0,x) = u(t_0,x) - u(t_0,x_0)
		\leq A f(x-x_0)
		\leq A |x-x_0| f'(\xi),
\]
for some $\xi \in [0,|x-x_0|]$.  Since $f$ is concave, then $f'(\xi) \leq f'(0)$.  Consider first $x$ such that $|x-x_0|>r$.  In this case, we have that
\[
	u(t_0,x) \leq A|x-x_0| f'(0)
		\leq \frac{A f'(0)}{r} |x-x_0|^2
		< \phi(t_0,x).
\]
Hence we have that $u(t_0,x) - \phi(t_0,x) < 0 = u(t_0,x_0) - \phi(t_0,x_0)$ for all $x$ such that $|x-x_0| > r$.  On the other hand, if $0 < |x-x_0|\leq r$ then $u(t_0,x) = 0$ and we have that $u(t_0,x) - \phi(t_0,x) < 0 = u(t_0,x_0) - \phi(t_0,x_0)$.  In both cases, we see that $x_0$ is a strict global maximum in $x$ of $u - \phi$ at time $t_0$.

Since $u_\epsilon$ converges locally uniformly to $u$, for $\epsilon$ small enough, we can construct sequence of points $(t_\epsilon,x_\epsilon)$ such that $(t_\e, x_\e)$ is the location of a maximum  of $u_\epsilon - \phi$ in $B_r(t_0,x_0)$ and  $(t_\epsilon,x_\epsilon) \to (t_0,x_0)$ as $\epsilon\to0.$  In addition, arguing as above, the function $x\mapsto (u_\epsilon-\phi)(t_\epsilon,x)$ has a global maximum in $x_\epsilon$. This gives us the inequalities, for all $x \in \R$,
\begin{equation}\label{eq:nice_test_fn}
	u_\epsilon(t_\epsilon,x) - u_\epsilon(t_\epsilon, x_\epsilon)
		\leq \phi(t_ \epsilon,x) - \phi(t_\epsilon, x_\epsilon),
		\text{ and } 
	\partial_t u_\e (t_\eps, x_\e) = 2(t_\e - t_0).
\end{equation}
Since $u_\epsilon$ solves~\cref{eq:uep_mut}, we deduce from~\cref{eq:nice_test_fn} that for $(t_\e, x_\e)$,
\begin{equation}\label{eq:n_e_convergence}
\begin{split}
	n_\e(t_\epsilon,x_\epsilon) -1 
	    & = \partial_t u_\e(t_\epsilon,x_\epsilon)\! - \!\left(1\! - \!\int_\R \!\!\!e^{-\frac{1}{\e}\left(\uep\left( t_\eps , x_\eps -    \sign(h)f^{-1}( \e f(h))\right)-\uep(t_\e, x_\e) \right)} J(h) \dd h\right) \\
	    & = 2(t_\e - t_0) - \left(1 - \int_\R \!\!\!e^{-\frac{1}{\e}\left(\uep\left( t_\eps , x_\eps -  \sign(h)f^{-1}( \e f(h))\right)-\uep(t_\e, x_\e) \right)} J(h) \dd h\right).
\end{split}
\end{equation}
Arguing as above and using~\cref{eq:nice_test_fn} with the integral term in~\cref{eq:n_e_convergence}, it follows that
\begin{multline*}
\lim_{\eps \to 0} \left(1 - \int_\R e^{-\frac{1}{\e}\left(u_\e\left( t_\eps , x_\eps -  \sign(h)f^{-1}( \e f(h))\right)-u_\e(t_\epsilon,x_\epsilon) \right)} J(h) \dd h\right) \\ \leq  1- \int_\R{  e^{\frac{\sign(h) f(h)}{f'(0)}\partial_x \phi(t_0,x_0)} J(h)  \, \dd h} = 0.
\end{multline*}
Here, the last equality used the explicit expression of $\phi$, which gives $\partial_x \phi(t_0,x_0) = 0$.  The above yields, along with~\cref{eq:n_e_convergence}
\begin{equation}\label{chris34}
	\liminf_{\e\to0} n_\e(t_\e,x_\e) \geq 1.
\end{equation}

Since $t_\epsilon\to t_0$ as $\epsilon\to0,$  we may conclude that $\ds\liminf_{\e \to 0} n_\e(t_\e, x_\e) \geq 1$.  On the other hand, we have that
\begin{equation*}\label{eq:n_e_to_1}
	\liminf_{\e \to 0} n_\e(t_0,x_0)
		\geq \liminf_{\e\to0} n_\e(t_\e,x_\e) e^{\frac{\phi(t_\e,x_\e)}{\e}}
		\geq 1,
\end{equation*}
where the first inequality is due to~\cref{eq:nice_test_fn} and the second is due to the non-negativity of $\phi$ along with~\cref{chris34}.
Using that $n_\e  \leq 1$, we conclude that $\lim_{\eps \to 0} n_\e(t_0,x_0) = 1$ as claimed.
\end{proof}

\subsection{Proof of \Cref{lem:estim_uep}: the {\em a priori} bounds}\label{sec:a_priori}

The only remaining ingredient is to prove the {\em a priori} bounds on $u^\e$.  We proceed by constructing explicit sub- and super-solutions.

\begin{proof}[{\bf Proof of \Cref{lem:estim_uep}}]
To estimate $u_\e$ from above, we first observe that $\nep$ is positive and bounded by $1$ and $u_\epsilon$ solves  \cref{eq:uep_mut}. Thus
\begin{equation*}
\begin{cases}
\partial_t \uep(t,x) \leq 1, \qquad & \hbox{on } (0,\infty)\times\R\\
\uep(0,\cdot) =\uep^0.
\end{cases}
\end{equation*}
As a consequence, for all $t\geq 0$ and $x\in\R,$ 
\begin{equation*}
\uep(t,x) \leq \uep^0(x) + t.
\end{equation*}

To get a bound from below, we define $\underline s(t,x) = \uep^0(x) - rt$, where $r$ is chosen below.  We prove that $\underline s$ is a super--solution. Using assumption \cref{cond:uep0_mutation}, for all $\epsilon>0$ and  $x \in \R$,
\begin{equation*}
\begin{array}{ll}
\ds \int_{-\infty}^\infty e^{-\frac{1}{\e}\left( \uep^0\left(x - \psi_\eps(h) \right)-\uep^0(x) \right)} J(h) \dd h & \leq \ds \int_{-\infty}^\infty e^{\frac{A}{\e} f\left(\vert \psi_\eps(h) \vert \right)} J(h) \dd h \\[0.3cm]
& =\ds  \int_{-\infty}^\infty e^{A f(h)} J(h) \dd h < + \infty.
\end{array}
\end{equation*}
Define $r := \int_\R e^{Af(h)}dh$.  We deduce that $\underline{s}$ satisfies 
\begin{equation*}
\begin{cases}
\partial_t \underline{s} - \left(1 - \displaystyle\int_\R e^{-\frac{1}{\e}\left(\underline{s}\left( t, x - \psi_\eps(h)  \right)-\underline{s}(t,x) \right)} J(h) \dd h\right) + 1 \leq 0, & \hbox{on } (0,\infty)\times\R,\\
\underline{s}(0,x) \leq \uep^0(x), & x\in\R.
\end{cases}
\end{equation*}
The function $\underline{s}$ is a sub--solution to \cref{eq:uep_mut}.  Hence, for all $t \geq 0$ and $x \in \R$,
\begin{equation*}
- rt \leq u_\e(t,x) - u_\e^0(x) \leq t.
\end{equation*}

To conclude the proof of the lemma, we now prove the inequality on the finite difference of $u_\e$, namely \cref{ineq:estim_ueph}.  To this end, we define for all $t\geq0$, $x\in\R$ and $h\in \R$,
\[
	w_{\e,h}(t,x):= u_\e(t,x) - u_\e(t,x+h).
\]
Then, using~\cref{eq:uep_mut} we see that $w_{\e,h}$ satisfies the equation
\begin{equation*}
\begin{array}{lll}
  	 \partial_t w_{\e,h} + \ds\int_\R & \left( e^{-\frac{1}{\e}\left(\uep\left( t, x - \psi_\eps(h)  \right)-\uep(t,x) \right)} -\right.&  \left. e^{-\frac{1}{\e}\left(\uep\left( t, x  - \psi_\eps(h) +h \right)-\uep(t,x+h) \right)} \right) J(h) \dd h \\
 	& = \left( 1 - e^{\frac{w_{\e,h}}{\eps}} \right)  \nep, \qquad &\hbox{on } \ (0,\infty)\times\R,\\
	 w_{\e,h}(0,x) & = \uep^0(x) -  \uep^0(x+h),  \qquad &\text{for all } x\in\R.
\end{array}
\end{equation*}
This reduces to
\begin{equation*}
\begin{array}{lll}
 \partial_t w_{\e,h} 
 +\ds \int_\R & e^{-\frac{1}{\e}\left(\uep\left( t, x - \psi_\eps(h)  \right)-\uep(t,x) \right)} & \left( 1 - e^{\frac{1}{\e}\left(w_{\e,h}\left( t, x  - \psi_\eps(h) \right)-w_{\e,h}(t,x) \right)} \right) J(h) \dd h \\
 & = \left( 1 - e^{\frac{w_{\e,h}}{\eps}} \right)  \nep,&\hbox{on } \ (0,\infty)\times\R,\\
 w_{\e,h}(0,x) & = \uep^0(x) -  \uep^0(x+h), &\text{for all } x\in\R.
\end{array}
\end{equation*}
We apply the maximum principle to deduce that
\[
	\sup_{x\in\R} w_{\e,h}(t,x)
		\leq \max \left\{0, \sup_{x\in\R} w_{\e,h}(0,x)\right\}.
\]
This implies that for all $x \in \R$ and $h \in \R$,
\begin{multline*}
	u_\e(t,x+h) - u_\e(t,x) \geq \min \left\{0, \inf_{y\in\R} \left( \uep^0(y+h) -  \uep^0(y) \right) \right\} \\ \geq \min \left\{0, - Af(h) \right\} = - Af(h).
\end{multline*}
This finishes the proof.
\end{proof}

\section*{Acknowledgments}

The authors would like to thank Sepideh Mirrahimi for helpful discussions regarding~\cite{SMSM}. 
Part of this work was performed within the framework of the LABEX MILYON (ANR- 10-LABX-0070) of Universit\'e de Lyon, within the program ``Investissements d'Avenir'' (ANR-11- IDEX-0007) and the project NONLOCAL (ANR-14-CE25-0013) operated by the French National Research Agency (ANR). In addition, this project has received funding from the European Research Council (ERC) under the European Unions Horizon 2020 research and innovation programme (grant agreement No 639638).  CH was partially supported by the National Science Foundation Research Training Group grant
DMS-1246999. 

\bibliographystyle{siamplain}
\bibliography{references}	
\end{document}